\numberwithin{equation}{section} 
\theoremstyle{plain}
\newcounter{nonumber}
\newtheorem{hypo}{Hypothèse}
\def\A{{\rm A}}
\def\B{{\rm B}}
\def\D{{\rm D}}
\def\E{{\rm E}}
\def\F{{\rm F}}
\def\G{{\rm G}}
\def\I{{\rm I}}
\def\J{{\rm J}}
\def\K{{\rm K}}
\def\M{{\rm M}}
\def\N{{\rm N}}
\def\Q{{\rm Q}}
\def\R{{\rm R}}
\def\SS{{\rm S}}
\def\T{{\rm T}}
\def\U{{\rm U}}
\def\V{{\rm V}}
\def\W{{\rm W}}
\def\X{{\rm X}}
\def\Y{{\rm Y}}
\def\Z{{\rm Z}}
\def\Hh{\mathscr{H}}
\def\HH{\mathfrak{H}}
\def\a{\alpha} 
\def\b{\beta}
\def\h{\phi}
\def\l{\lambda}
\def\m{\mathfrak{m}}
\def\ie{c'est-à-dire }
\def\>{\geqslant}
\def\<{\leqslant}
\def\Hom{{\rm Hom}}
\def\End{{\rm End}}
\def\GL{{\rm GL}}
\def\Ker{{\rm Ker}}
\def\Ind{{\rm Ind}}
\def\ind{{\rm ind}}
\def\mult#1{{#1}^{\times}}
\def\HT{\EuScript{C}} 
\def\vr{\varrho}
\def\tau{\vr}
\def\Hh{\EuScript{H}}
\def\HH{\EuScript{A}}
\def\rr{\mathfrak{r}}
\author{Guy Henniart}
\address{Université de Paris-Sud\\
Laboratoire de Mathémati\-ques d'Orsay\\
F-91405 Orsay Cedex} 
\email{guy.henniart@math.u-psud.fr}
\author{Vincent Sécherre}
\address{Université de Versailles St-Quentin-en-Yvelines\\
Laboratoire de Mathémati\-ques de Versailles\\
45 avenue des Etats-Unis\\
78035 Versailles cedex, France}
\email{vincent.secherre@math.uvsq.fr}
\title{Types et contragrédientes}
\begin{abstract}
Soit $\G$ un groupe réductif $p$-adique, et soit $\R$ un corps algébriquement 
clos. 
Soit $\pi$ une représentation lisse de $\G$ dans un espace vectoriel $\V$ sur 
$\R$. 
Fixons un sous-groupe ouvert et compact $\K$ de $\G$ et une représentation 
lisse irréductible $\tau$ de $\K$ dans un espace vectoriel $\W$ de dimension 
finie sur $\R$.
Sur l'espace $\Hom_\K(\W,\V)$ agit l'algèbre d'entrelacement $\Hh(\G,\K,\W)$. 
Nous examinons la compatibilité de ces constructions avec le passage aux 
représentations contragrédientes $\V^\vee$ et $\W^\vee$, et donnons en 
particulier des conditions sur $\W$ ou sur la caractéristique de $\R$ pour que 
le comportement soit semblable au cas des représentations complexes. 
Nous prenons un point de vue abstrait, n'utilisant que des propriétés 
générales de $\G$.
Nous terminons par une application à la théorie des types pour le groupe 
$\GL_n$ et ses formes intérieures sur un corps local non archimédien. 
\end{abstract}
\long\def\MSC#1\EndMSC{\def\arg{#1}\ifx\arg\empty\relax\else
     {\par\narrower\noindent%
     2010 Mathematics Subject Classification: #1\par}\fi}
\long\def\KEY#1\EndKEY{\def\arg{#1}\ifx\arg\empty\relax\else
	{\par\narrower\noindent Keywords and Phrases: #1\par}\fi}
\begin{document}

\thispagestyle{empty}

\maketitle

\MSC 
22E50 
\EndMSC

\KEY 
Modular representations of $p$-adic reductive groups, 
Types, Contragredient, Intertwining
\EndKEY

\section{Introduction}

\subsection{}

Soient $p$ un nombre premier et $\G$ un groupe réductif $p$-adique. 
Dans l'étude des repré\-sen\-tations lisses de $\G$ dans des espaces vectoriels 
complexes, la restriction aux sous-groupes ouverts et compacts joue un rôle 
important \cite{BKl,BKt}.
Dans ce contexte, on fixe un sous-groupe ouvert et compact $\K$ de $\G$ et 
une représentation lisse irréductible $\tau$ de $\K$ dans un espace vectoriel 
complexe $\W$, de dimension finie.
Pour toute représentation lisse $\pi$ de $\G$ dans un espace vectoriel 
complexe $\V$, on forme l'espace d'entrelacement $\Hom_\K(\W,\V)$ qui est 
naturellement un module à droite sur l'algèbre d'entrelacement -- ou 
algèbre de Hecke -- $\Hh(\G,\K,\W)$. 
Passant aux contragrédientes, l'espace $\Hom_\K(\W^\vee,\V^\vee)$ est 
naturellement un module à droite sur $\Hh(\G,\K,\W^\vee)$.  
Mais cette dernière algèbre est naturellement anti-isomorphe à 
$\Hh(\G,\K,\W)$.  
Dans ce cas classique, les représentations lisses de $\K$ sont semi-simples, 
et $\Hom_\K(\W^\vee,\V^\vee)$ s'identifie au dual de $\Hom_\K(\W,\V)$, comme 
module à gauche sur $\Hh(\G,\K,\W)$~: nous retrouverons plus loin ce cas 
particulier. 

\subsection{}

L'arithmétique des formes modulaires a imposé l'étude des représentations
lisses de $\G$ dans des espaces vectoriels sur un corps algébriquement clos 
$\R$, de caractéristique quelconque \cite{Vigb,MSc,Herzig}.
En l'absence de méthodes analytiques, la restriction aux sous-groupes ouverts 
et compacts est en ce cas un outil encore plus important que dans le cas 
complexe. 
On a de la même façon que plus haut des modules d'entrelacement 
$\Hom_\K(\W,\V)$ sur des algèbres de Hecke $\Hh(\G,\K,\W)$ à coefficients 
dans $\R$.

Dans le cas où la caractéristique de $\R$ n'est pas $p$, on dispose d'une 
bonne théorie des contra\-grédientes, et il est nécessaire d'examiner le 
comportement des modules $\Hom_\K(\W,\V)$ par passage aux contragrédientes. 

Comme dans le cas complexe, on dispose d'un anti-isomorphisme d'algèbres de 
$\Hh(\G,\K,\W)$ sur $\Hh(\G,\K,\W^\vee)$, de sorte que 
$\Hom_\K(\W^\vee,\V^\vee)$ est un module à gauche sur $\Hh(\G,\K,\W)$.
Par ailleurs on a un accouplement naturel 
$(\h,\psi)\mapsto\langle\h,\psi\rangle$ de 
$\Hom_\K(\W,\V)\times\Hom_\K(\W^\vee,\V^\vee)$ dans $\R$.
Cependant cet accouplement peut être dégénéré. 
On cherche dans quelles conditions il est non dégénéré, et hermitien au sens 
où $\langle\h\T,\psi\rangle=\langle\h,\T\psi\rangle$ pour $\T\in\Hh(\G,\K,\W)$. 
En particulier, nous obtenons le résultat suivant (voir la remarque 
\ref{diviseur} et le théorème \ref{dimpreml}).

\begin{prop}
Supposons que le pro-ordre de $\K$ est inversible dans $\R$. 
Alors l'accouple\-ment 
$\Hom_\K(\W,\V)\times\Hom_\K(\W^\vee,\V^\vee)\to\R$
est non dégénéré et hermitien.
\end{prop}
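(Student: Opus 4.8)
The plan is to reduce the statement to the tautological duality between an $\R$-vector space and its linear dual, exploiting that the hypothesis makes restriction to $\K$ very well behaved. Since the pro-order of $\K$ is invertible in $\R$, every smooth $\R$-representation of $\K$ is semisimple, $\K$ carries an $\R$-valued Haar measure of total mass $1$, and the projector onto the $\tau$-isotypic part of any smooth representation is given by an idempotent $e_\tau$ of the Hecke algebra $\Hh(\G,\R)$ of locally constant compactly supported $\R$-valued functions on $\G$ (supported on $\K$, inflated from the central $\tau$-idempotent of $\R[\K/\K_1]$ for a suitable open normal subgroup $\K_1$ of $\K$). For the standard anti-involution $f\mapsto\check f$, $\check f(g)=f(g^{-1})$, one has $\check e_\tau=e_{\tau^\vee}$, and for every smooth representation $\V$ of $\G$ the canonical pairing $\V\times\V^\vee\to\R$ satisfies $\langle fv,\xi\rangle=\langle v,\check f\xi\rangle$; in particular $e_\tau$ acting on $\V$ and $e_{\tau^\vee}$ acting on $\V^\vee$ are mutually adjoint.

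Put $S=\Hom_\K(\W,\V)$ and $\V_0=(1-e_\tau)\V$. Because $\W$ is irreducible and $\R$ algebraically closed, evaluation $\W\otimes_\R S\xrightarrow{\ \sim\ }e_\tau\V$, $w\otimes\phi\mapsto\phi(w)$, is a $\K$-isomorphism, so $\V=(\W\otimes_\R S)\oplus\V_0$. On the dual side, adjointness of $e_\tau$ and $e_{\tau^\vee}$ together with non-degeneracy of $\V\times\V^\vee\to\R$ in the second variable gives $\xi\in e_{\tau^\vee}\V^\vee\iff\langle(1-e_\tau)v,\xi\rangle=0$ for all $v$; hence $e_{\tau^\vee}\V^\vee$ is the space of $\xi\in\V^\vee$ vanishing on $\V_0$, which restriction (and extension by zero) identifies with the smooth dual $(\W\otimes_\R S)^\vee$, and since $\W$ is finite dimensional the latter is just $\W^\vee\otimes_\R S^*$, with $S^*$ the full $\R$-dual of $S$. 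As any $\K$-morphism out of the irreducible $\W^\vee$ has image in the $\tau^\vee$-isotypic part, and $\End_\K(\W^\vee)=\R$, we obtain, naturally in $\V$,
\[
\Hom_\K(\W^\vee,\V^\vee)=\Hom_\K\!\big(\W^\vee,\ \W^\vee\otimes_\R S^*\big)=\End_\K(\W^\vee)\otimes_\R S^*=S^*.
\]

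Then I would read off the conclusions. With $\phi\leftrightarrow s\in S$ and $\psi\leftrightarrow\lambda\in S^*$, the corresponding elements of $e_\tau\V$ and $e_{\tau^\vee}\V^\vee$ are $w\mapsto w\otimes s$ and $\xi\mapsto\xi\otimes\lambda$; the pairing $(\W\otimes_\R S)\times(\W^\vee\otimes_\R S^*)\to\R$ induced by $\V\times\V^\vee\to\R$ is the tensor product of the two evaluation pairings, so a short computation gives $\langle\phi,\psi\rangle=\lambda(s)$ up to an invertible scalar (equal to $1$ or to $\dim_\R\W$ according to the normalization of the pairing, and $\dim_\R\W$ is invertible in $\R$ here). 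Thus $\langle\ ,\ \rangle$ is, up to a unit, the tautological pairing $S\times S^*\to\R$, which is non-degenerate on both sides. For the hermitian property, after these identifications the right action of $\Hh(\G,\K,\W)$ on $S$ and the left action on $S^*$ — the latter carried over through the anti-isomorphism $\Hh(\G,\K,\W)\to\Hh(\G,\K,\W^\vee)$ — are both induced from the $\Hh(\G,\R)$-module structures of $\V$ and $\V^\vee$ via $e_\tau$ and $e_{\tau^\vee}$; so $\langle\phi\T,\psi\rangle=\langle\phi,\T\psi\rangle$ is a special case of $\langle fv,\xi\rangle=\langle v,\check f\xi\rangle$ with $f\in e_\tau\ast\Hh(\G,\R)\ast e_\tau$, equivalently it says that the left action on $S^*$ is the transpose of the right action on $S$.

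The step I expect to be the main obstacle is the bookkeeping with smooth duals when $\V$ is not admissible — $S$ infinite dimensional, $\V\to\V^{\vee\vee}$ not onto: one must check carefully that the $\tau^\vee$-isotypic part of $\V^\vee$ is exactly the annihilator of $\V_0$, and that $(\W\otimes_\R S)^\vee=\W^\vee\otimes_\R S^*$ with the product pairing. This is precisely where the hypothesis does its work, through the idempotents $e_\tau,e_{\tau^\vee}$ and the finiteness of $\dim_\R\W$. A lesser, essentially notational, point is to match the paper's anti-isomorphism $\Hh(\G,\K,\W)\to\Hh(\G,\K,\W^\vee)$ with the restriction of $f\mapsto\check f$ to $e_\tau\ast\Hh(\G,\R)\ast e_\tau$, so that the hermitian identity follows from the standard adjunction. (For the easy half of non-degeneracy there is also a direct argument: if $\phi\neq0$ it embeds $\W$ in $\V$, a $\K$-stable complement exists by semisimplicity, and the contragredient of the resulting $\K$-retraction $\V\to\W$ is a $\psi\in\Hom_\K(\W^\vee,\V^\vee)$ with $\langle\phi,\psi\rangle\neq0$.)
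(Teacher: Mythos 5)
Your proof is correct in substance, but for the hermitian half it takes a genuinely different route from the paper's. For non-degeneracy you and the paper do essentially the same thing: your decomposition $\V=e_\tau\V\oplus(1-e_\tau)\V$ is the paper's condition that $\V(\tau)\to\V_\tau$ be an isomorphism (remarque \ref{diviseur}), and your identification of the pairing with the tautological pairing on $S\times S^*$ is proposition \ref{P23}. For the hermitian property, however, the paper proves th\'eor\`eme \ref{dimpreml} by a direct computation: the two sides, written as the sums \eqref{EA} and \eqref{EB}, are homotheties of $\W$ with the same trace and the same number of terms, hence are equal as soon as $\dim\W$ is prime to the characteristic exponent --- a condition implied by, but strictly weaker than, invertibility of the pro-order of $\K$ (it also covers the situation of \S\ref{K1}). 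Your route through the adjunction $\langle fv,\xi\rangle=\langle v,\check f\xi\rangle$ does work, but the step you dismiss as ``essentially notational'' is where the entire content of the hermitian identity sits: you must exhibit, for each $\T\in\Hh$, an element $f_\T$ of the scalar Hecke algebra satisfying both $f_\T\cdot\phi=\phi\T$ for every $\phi\in\Hom_\K(\W,\V)$ and $\check f_\T=f_{\T^\vee}$. This can be done by taking $f_\T(g)=\dim\W\cdot\mu(\K)^{-1}\,\tr\T(g^{-1})$: the identity $f_\T\cdot\phi=\phi\T$ is Schur orthogonality on the finite quotient $\K/\K_1$ (which is exactly where the hypothesis on the pro-order is used), and $\check f_\T=f_{\T^\vee}$ is immediate from $\tr(\T(g)^\vee)=\tr\T(g)$; with that spelled out, your argument closes. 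What your approach buys is a conceptual explanation --- the right action of $\Hh$ on $S$ and the left action on $S^*$ are restrictions of the mutually adjoint actions of $e_\tau\,\Hh(\G,\R)\,e_\tau$ on $\V$ and $e_{\tau^\vee}\,\Hh(\G,\R)\,e_{\tau^\vee}$ on $\V^\vee$ --- at the price of using the full strength of the hypothesis, whereas the paper's trace argument survives in situations where one only knows $\ell\nmid\dim\W$.
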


\subsection{}

Supposons $\R$ de caractéristique $\ell$ différente de $p$, et examinons le 
cas, inévitable en pratique, où $\ell$ divise le pro-ordre de $\K$. 
Alors les représentations lisses de $\K$ ne sont plus forcément semi-simples, 
et l'accouplement peut effectivement être dégénéré. 
Cependant une situation fréquente est celle où $\K$ possède un sous-groupe 
distingué fermé $\K_1$, de pro-ordre premier à $\ell$, 
qui vérifie les deux conditions suivantes~:
\begin{enumerate}
\item
La restriction de $\tau$ à $\K_1$ est multiple d'une représentation 
irréductible $\eta$.
\item
Le composant isotypique de $\pi$ de type $\tau$ est aussi le composant 
isotypique de type $\eta$.
\end{enumerate}
En ce cas, l'accouplement plus haut est encore non dégénéré (voir 
ci-dessous \S\ref{K1}), et si de plus $\ell$ ne divise pas la dimension de $\W$, 
il est aussi hermitien (théorème \ref{dimpreml}).

\subsection{}

Le but de cette courte note est d'étudier la situation générale.
Nous prenons un point de vue abstrait, où $\G$ est un groupe localement 
profini et $\R$ un corps commutatif quelconque. 
Dans un premier temps, nous examinons les représentations lisses de $\K$ 
dans des espaces vectoriels sur $\R$, et leur comportement par passage à la 
contragrédiente. 
Nous passons ensuite à l'étude des modules d'entrelacement. 
Notre conclusion essentielle réside dans les formules \eqref{EA} et \eqref{EB} 
du \S\ref{calculs}, 
dont la comparaison permet ensuite de donner des critères et exemples 
où l'accouplement est hermitien.

\subsection{}

Pour finir, nous donnons des applications de ce travail à la théorie des types 
d'un groupe linéaire général sur un corps local non archimédien $\F$.
Nous prouvons que les types 
simples de Bushnell-Kutzko -- et plus 
généralement leur version modulaire construite dans \cite{MSt} pour les for\-mes 
intérieures de $\GL_n(\F)$, $n\>1$ -- satisfont aux conditions requises pour que 
l'accouplement soit non dégénéré et hermitien, et ce pour toute représentation 
$\V$ de $\G$ qui est un sous-quotient d'une représentation engendrée par son composant 
isotypique de type $\tau$ (théorème \ref{thtss}).

\section{Représentations de $\K$}
\label{S2}

\subsection{}

On fixe désormais un corps commutatif $\R$, et on note $\ell$ sa 
caractéristique. 
Par re\-pré\-sen\-tation $(\pi,\V)$ d'un groupe $\G$, on entendra représentation 
dans un espace vectoriel $\V$ sur le corps $\R$.

Dans ce paragraphe \ref{S2}, on fixe un groupe profini $\K$.
Si $(\pi,\V)$ est une représentation lisse de $\K$, sa contragrédiente 
$\pi^\vee$ est la représentation naturelle de $\K$ dans le sous-espace 
$\V^\vee$ du dual $\V^*$ de $\V$ formé des formes linéaires sur $\V$ 
dont le stabilisateur dans $\K$ est ouvert.
L'homomorphisme d'évaluation $\V\otimes\V^\vee\to\R$ défini par 
$(x,\l)\mapsto\l(x)$ induit un entrelacement naturel de $\V$ vers son 
bicontragrédient $\V^{\vee\vee}$. 

Si $\V$ est de dimension finie, $\pi$ se factorise par un quotient fini de 
$\K$, de sorte que $\V$ et $\V^\vee$ ont même dimension et que l'entrelacement 
naturel de $\V$ dans $\V^{\vee\vee}$ est un isomorphisme. 
Montrons par un exemple que ces bonnes propriétés ne s'étendent pas forcément 
aux représentations de dimension infinie. 

\begin{exem}
\label{EX21}
On suppose que $\ell$ n'est pas nul, et aussi que le pro-ordre de $\K$ est divisible par 
$\ell^{\infty}$~; autrement dit, pour chaque entier $r\>1$, $\K$ possède un 
sous-groupe ouvert, qu'on peut prendre distingué, d'indice divisible par 
$\ell^r$. 
Considérons la représentation (lisse) de $\K$ dans l'espace $\V$
des fonctions localement constantes de $\K$ dans $\R$,
disons par translations à droite. 
\textit{Prouvons que $\V^\vee$ est nul.}

Soit $\l\in\V^\vee$.
Il suffit de prouver que, si $\J$ est un sous-groupe ouvert de $\K$ fixant 
$\l$, alors $\l$ est nulle sur le sous-espace vectoriel de $\V$ formé des 
fonctions invariantes à droite par $\J$. 
Grâce à l'hypothèse, il existe un sous-groupe ouvert distingué $\J'$ de $\J$ 
dont l'indice est divisible par $\ell$. 
Fixons un système $\X$ de représentants de $\J'\backslash\J$ dans $\J$. 
Pour $g$ dans $\K$, la fonction caractéristique de $g\J$ est somme des 
fonctions caractéristiques $\chi_k$ de $g\J k$, pour $k$ parcourant $\X$~; 
mais $\l(\chi_k)$ ne dépend pas de $k$, et le cardinal de $\X$ est divisible 
par $\ell$, de sorte que $\l$ s'annule sur la fonction caractéristique de 
$g\J$. 
\end{exem}

\subsection{}

Fixons une représentation lisse absolument irréductible $(\tau,\W)$ de $\K$~: 
elle est donc de dimen\-sion finie sur $\R$.
Soit $(\pi,\V)$ une représentation lisse de $\K$.
La donnée de deux entrelacements $\h\in\Hom_\K(\W,\V)$ et 
$\psi\in\Hom_\K(\W^\vee,\V^\vee)$ 
définit un entrelacement~:
\begin{equation*}
\h\otimes\psi\in\Hom_{\K\times\K}(\W\otimes\W^\vee,\V\otimes\V^\vee).
\end{equation*}
Composant avec l'évaluation $\V\otimes\V^\vee\to\R$, on obtient un 
entrelacement dans $\Hom_{\K}(\W\otimes\W^\vee,\R)$, où $\K$ agit 
sur $\W\otimes\W^\vee$ par l'action diagonale. 
Comme $\tau$ est absolument irréductible de dimension finie, l'espace 
$\Hom_{\K}(\W\otimes\W^\vee,\R)$ est de dimension $1$, de base 
l'évaluation $\W\otimes\W^\vee\to\R$.
On obtient donc un accouplement naturel 
$(\h,\psi)\mapsto\langle\h,\psi\rangle$ de 
$\Hom_\K(\W,\V)\times\Hom_\K(\W^\vee,\V^\vee)$ dans $\R$, donné par la 
formule~:
\begin{equation*}
\psi(w^\vee)(\h(w))=\langle\h,\psi\rangle\cdot w^\vee(w)
\end{equation*}
pour $w\in\W$ et $w^\vee\in\W^\vee$. 

Cet accouplement peut être dégénéré, puisque nous avons vu que $\V^\vee$ 
peut être nul sans que $\V$ le soit. 
Étudions cela plus avant. 

\begin{rema}
\label{rem1}
Comme $\W$ est de dimension finie, le passage au transposé donne un 
isomor\-phisme de $\Hom_\K(\V,\W)$ sur $\Hom_\K(\W^\vee,\V^\vee)$. 
De plus, $\W$ étant absolument irréductible, la $\R$-algèbre 
$\End_\K(\W)$ est réduite 
aux homothéties $\R\cdot{\rm id}_\W$.
En ces termes, 
l'accouplement précédent se traduit en la composition 
$\Hom_\K(\W,\V)\times\Hom_\K(\V,\W)\to\End_{\K}(\W)\simeq\R$.
\end{rema}

\subsection{}
\label{S23}

Notons $\V(\tau)$ la plus grande sous-représentation de $\V$ isotypique 
de type $\tau$.
C'est l'image de $\Hom_\K(\W,\V)\otimes\W$ dans $\V$ par l'application 
injective $\h\otimes w\mapsto\h(w)$.
En particulier, si $\V$ est isotypique de type $\tau$, alors $\V^*$ est isomorphe 
à $\Hom_\K(\W,\V)^*\otimes\W^\vee$, de sorte que $\V^\vee$ est égale à $\V^*$ 
et est isotypique de type $\tau^\vee$.
De plus, l'application $\V\to\V^{\vee\vee}$ est injective. 
En général, notons $\V_\tau$ le plus grand quotient de $\V$ isotypique 
de type $\tau$, et utilisons des notations analogues pour $\V^\vee$, $\tau^\vee$.

Les entrelacements de $\V$ dans $\W$ se factorisent par l'espace $\V_\tau$, 
ce qui donne un isomorphisme de 
$\Hom_\K(\V_\tau,\W)$ sur $\Hom_\K(\V,\W)$ et, 
par transposition, 
un isomorphisme 
de $\Hom_\K(\W^\vee,(\V_\tau)^\vee)$ sur $\Hom_\K(\W^\vee,\V^\vee)$
(voir la remarque \ref{rem1}).
Considérons alors le diagramme suivant où les flè\-ches 
verticales proviennent de la projection $\V\to\V_\tau$, celle de 
droite étant un isomorphisme~:
\begin{eqnarray*}
\Hom_\K(\W,\V)\times\Hom_\K(\W^\vee,\V^\vee)\to\R\\
\downarrow\quad\quad\quad\quad\quad\quad\uparrow\quad\quad\quad\quad\quad\quad\\
\Hom_\K(\W,\V_\tau)\times\Hom_\K(\W^\vee,(\V_\tau)^\vee)\to\R
\end{eqnarray*}
Comme $\V_\tau$ est isotypique de type $\tau$, l'accouplement de la ligne 
du bas est non dégénéré~: plus précisément, 
son noyau à droite et son noyau à gauche sont nuls, 
et il induit un iso\-morphis\-me de 
$\Hom_\K(\W^\vee,(\V_\tau)^\vee)$ sur $\Hom_\K(\W,\V_\tau)^*$~; si de plus 
$\V_\tau$ est de dimension finie, il induit un iso\-mor\-phisme de 
$\Hom_\K(\W,\V_\tau)$ sur $\Hom_\K(\W^\vee,(\V_\tau)^\vee)^*$.
Remarquant que le diagramme est com\-patible aux accouplements et que les 
entrelacements de $\W$ dans $\V$ prennent leurs valeurs dans $\V(\tau)$, 
on a obtenu le résultat suivant. 

\begin{prop}
\label{P23}
Supposons que l'application $\V(\tau)\to\V_\tau$ est un isomorphisme. 
L'accouple\-ment~:
\begin{equation}
\label{AccK}
\Hom_\K(\W,\V)\times\Hom_\K(\W^\vee,\V^\vee)\to\R
\end{equation}
induit alors d'une part un isomorphisme de 
$\Hom_\K(\W^\vee,\V^\vee)$ sur $\Hom_\K(\W,\V)^*$, 
d'autre part une injection
de $\Hom_\K(\W,\V)$ dans $\Hom_\K(\W^\vee,\V^\vee)^*$, qui est une bijection si 
$\V(\tau)$ est de dimen\-sion finie. 
\end{prop}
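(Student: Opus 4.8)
The idea is to take the commutative diagram already set up before the statement and chase the nondegeneracy and finiteness properties of the bottom pairing up to the top one, using that the right vertical arrow is an isomorphism and that — under the hypothesis $\V(\tau)\to\V_\tau$ is an isomorphism — the left vertical arrow is \emph{also} essentially an isomorphism onto what matters. First I would recall that by construction any $\h\in\Hom_\K(\W,\V)$ has image inside $\V(\tau)$, so composing with the projection $\V\to\V_\tau$ gives a map $\Hom_\K(\W,\V)\to\Hom_\K(\W,\V_\tau)$ which, because $\V(\tau)\to\V_\tau$ is an isomorphism, is itself an isomorphism: indeed $\V(\tau)$ is already isotypic of type $\tau$, so $\Hom_\K(\W,\V)=\Hom_\K(\W,\V(\tau))$, and the hypothesis identifies the target with $\Hom_\K(\W,\V_\tau)$. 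On the other side, the paragraph preceding the statement already records that the transpose/factorization through $\V_\tau$ gives an isomorphism $\Hom_\K(\W^\vee,\V_\tau^\vee)\xrightarrow{\ \sim\ }\Hom_\K(\W^\vee,\V^\vee)$; this is the right vertical arrow (read upward).

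Next I would invoke the case already disposed of in \S\ref{S23}, namely $\V_\tau$ isotypic of type $\tau$: there the pairing
\[
\Hom_\K(\W,\V_\tau)\times\Hom_\K(\W^\vee,\V_\tau^\vee)\to\R
\]
has trivial left and right kernels, induces an isomorphism $\Hom_\K(\W^\vee,\V_\tau^\vee)\simeq\Hom_\K(\W,\V_\tau)^*$, and when $\V_\tau$ is finite-dimensional (equivalently $\V(\tau)$ is, by the hypothesis) also an isomorphism $\Hom_\K(\W,\V_\tau)\simeq\Hom_\K(\W^\vee,\V_\tau^\vee)^*$. The last point to check is that the diagram genuinely commutes, i.e.\ that the top pairing $\langle\cdot,\cdot\rangle$ equals the bottom one transported along the two vertical maps. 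This is immediate from the defining formula $\psi(w^\vee)(\h(w))=\langle\h,\psi\rangle\,w^\vee(w)$: replacing $\h$ by its image in $\Hom_\K(\W,\V_\tau)$ and $\psi$ by its preimage in $\Hom_\K(\W^\vee,\V_\tau^\vee)$ does not change either side, since the evaluation $\V\times\V^\vee\to\R$ factors compatibly through $\V_\tau\times\V_\tau^\vee$ (one uses that $\Hom_\K(\W^\vee,\V^\vee)$ is actually the transpose space $\Hom_\K(\V,\W)$ of Remark~\ref{rem1}, and that transposition is compatible with factoring through $\V_\tau$).

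Putting these together: on the top row, the left kernel of $\langle\cdot,\cdot\rangle$ is the preimage under the (iso) left vertical map of the left kernel of the bottom pairing, hence zero, giving the injection $\Hom_\K(\W,\V)\hookrightarrow\Hom_\K(\W^\vee,\V^\vee)^*$; its right kernel is likewise zero, and since the right vertical map is an isomorphism onto $\Hom_\K(\W^\vee,\V^\vee)$ while the bottom pairing induces $\Hom_\K(\W^\vee,\V_\tau^\vee)\simeq\Hom_\K(\W,\V_\tau)^*\simeq\Hom_\K(\W,\V)^*$, we get the claimed isomorphism $\Hom_\K(\W^\vee,\V^\vee)\simeq\Hom_\K(\W,\V)^*$. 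When $\V(\tau)$ is finite-dimensional, so is $\Hom_\K(\W,\V_\tau)=\Hom_\K(\W,\V)$, the finite-dimensional case of the bottom row applies, and the top injection becomes a bijection. The only delicate point — and really the crux — is the verification that the hypothesis $\V(\tau)\xrightarrow{\sim}\V_\tau$ does make the left vertical arrow an isomorphism and not merely an injection; everything else is a formal diagram chase. I expect no serious obstacle beyond being careful that ``isotypic of type $\tau$'' for $\V_\tau$ is used in exactly the form proved in \S\ref{S23}, where the finite-dimensionality of $\W$ and absolute irreducibility of $\tau$ (so $\End_\K(\W)=\R$) are what make the pairing on an isotypic space perfect.
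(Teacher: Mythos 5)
Your proof is correct and follows essentially the same route as the paper: the same commutative diagram relating the pairing for $\V$ to the one for the isotypic quotient $\V_\tau$, with the right vertical arrow an isomorphism by factorization through $\V_\tau$ and the left one an isomorphism because intertwiners from $\W$ land in $\V(\tau)\simeq\V_\tau$. You merely make explicit the point the paper leaves implicit (that the left vertical arrow is an isomorphism, not just an injection), so nothing further is needed.
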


\begin{rema}
\label{diviseur}
Si le pro-ordre de $\K$ est inversible dans $\R$, toute représentation lisse
de $\K$ est semi-simple et, quel que soit $\V$, l'hypothèse de la proposition 
est satisfaite. 
C'est le cas en particulier si $\R$ est de caractéristique nulle, par exemple 
si $\R=\mathbb{C}$.
\end{rema}

\subsection{}
\label{K1}

Dans la pratique, le pro-ordre de $\K$ n'est pas forcément inversible dans
$\R$.
Mais il arrive (voir \cite{MSt}) que $\K$ possède un sous-groupe ouvert
distingué $\K_1$ ayant les propriétés suivantes~: 
\begin{enumerate}
\item
Le pro-ordre de $\K_1$ est inversible dans $\R$.
\item
La restriction de $\tau$ à $\K_1$ est multiple d'une représentation absolument 
irréductible $\eta$ de $\K_1$.
\item
$\V(\tau)=\V(\eta)$. 
\end{enumerate}

\begin{prop}
Sous les conditions précédentes, l'application $\V(\tau)\to\V_\tau$ est un 
isomorphisme. 
\end{prop}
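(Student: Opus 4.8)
The goal is to show that under conditions (1)--(3) the natural injection $\V(\tau)\hookrightarrow\V$ followed by the projection $\V\to\V_\tau$ is an isomorphism; equivalently, that $\V(\tau)$ is a direct summand of $\V$ as a $\K$-representation, with complementary summand equal to the kernel of $\V\to\V_\tau$. Since $\V(\tau)\to\V_\tau$ is always injective (its kernel would be a sub isotypic of type $\tau$ mapping to zero in $\V_\tau$, hence zero) we only need surjectivity, i.e.\ we need $\V=\V(\tau)+\Ker(\V\to\V_\tau)$, or better $\V=\V(\tau)\oplus\V'$ for some $\K$-subrepresentation $\V'$ with $\V'(\tau)=0$.

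The main tool will be condition (1): since the pro-order of $\K_1$ is invertible in $\R$, the restriction functor to $\K_1$ lands in semisimple representations, and one has a $\K_1$-equivariant idempotent projector onto any isotypic component; in particular, averaging over $\K_1$ against the matrix coefficients of $\eta$ (this makes sense because $|\K_1|$ is invertible) produces a projector $e_\eta\colon\V\to\V(\eta)$ that is even $\K$-equivariant, because $\K$ normalizes $\K_1$ and therefore permutes the $\K_1$-isotypic components; since $\Res_{\K_1}\tau$ is $\eta$-isotypic by (2), $\K$ must stabilize the component of type $\eta$ (conjugation by $\K$ sends $\eta$ to an irreducible constituent of $\Res_{\K_1}\tau$, which is again $\eta$), so $e_\eta$ commutes with the $\K$-action. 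Thus $\V=\V(\eta)\oplus\Ker e_\eta$ as $\K$-representations, and $\Ker e_\eta$ has no $\K_1$-subquotient of type $\eta$, hence a fortiori no $\K$-subquotient of type $\tau$ by (2), so $(\Ker e_\eta)(\tau)=0$ and equally $(\Ker e_\eta)_\tau=0$.

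Now invoke condition (3): $\V(\tau)=\V(\eta)$. Combining with the splitting, $\V=\V(\tau)\oplus\Ker e_\eta$ with the second factor killed by both $(\cdot)(\tau)$ and $(\cdot)_\tau$. Applying the maximal-$\tau$-quotient functor to this direct sum gives $\V_\tau=\V(\tau)_\tau\oplus(\Ker e_\eta)_\tau=\V(\tau)_\tau$, and since $\V(\tau)$ is already $\tau$-isotypic the canonical map $\V(\tau)\to\V(\tau)_\tau$ is an isomorphism; chasing through, the composite $\V(\tau)\hookrightarrow\V\twoheadrightarrow\V_\tau$ is precisely this isomorphism. Hence $\V(\tau)\to\V_\tau$ is an isomorphism, as claimed.

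**Main obstacle.** The delicate point is the $\K$-equivariance of the projector $e_\eta$: one must check that $\K$ (not merely $\K_1$) preserves the $\eta$-isotypic subspace $\V(\eta)$. This rests on $\K_1$ being \emph{distingué} in $\K$ together with (2): for $k\in\K$, the twist $\eta^k$ is an irreducible $\K_1$-constituent of $\Res_{\K_1}\tau^k\simeq\Res_{\K_1}\tau$, which by (2) is $\eta$-isotypic, forcing $\eta^k\simeq\eta$; hence $k\cdot\V(\eta)=\V(\eta^{k^{-1}})=\V(\eta)$. Once this is granted everything else is formal bookkeeping with the functors $(\cdot)(\tau)$, $(\cdot)(\eta)$ and their quotient analogues, and with the exactness of restriction to $\K_1$ coming from (1).
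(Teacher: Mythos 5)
Your proof is correct and follows essentially the same route as the paper: both arguments rest on the Clifford-theoretic observation that $\eta$ is stable under $\K$-conjugation (because it is the unique irreducible constituent of $\tau|_{\K_1}$) together with the semisimplicity of smooth $\K_1$-representations, which yields a $\K$-equivariant splitting of the $\eta$-isotypic component; your explicit idempotent $e_\eta$ is just the concrete form of the paper's statement that the $\K$-equivariant projection $\V(\eta)\to\V_\eta$ is an isomorphism. One remark: the parenthetical claim in your opening that $\V(\tau)\to\V_\tau$ is \emph{always} injective is false in general --- the paper's example \ref{EX1} exhibits a $\V$ with $\V(\tau)\neq\{0\}$ but $\V_\tau=\{0\}$, and a $\tau$-isotypic subrepresentation has no reason to inject into the maximal $\tau$-isotypic quotient. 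This slip is harmless here, since your direct-sum decomposition $\V=\V(\tau)\oplus\Ker e_\eta$ with $(\Ker e_\eta)(\tau)=(\Ker e_\eta)_\tau=\{0\}$ establishes bijectivity directly without using that claim, but the parenthetical should be deleted.
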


\begin{proof}
Comme $\eta$ est la seule représentation irréductible de $\K_1$ intervenant 
dans $\W$, sa classe d'isomorphisme est invariante par conjugaison par $\K$ et 
$\V_{\eta}$ est une représentation de $\K$.
La projection de $\V(\tau)=\V(\eta)$ dans $\V_{\eta}$ est $\K$-équivariante 
et, comme le pro-ordre de $\K_1$ est inversible dans $\R$, c'est même un isomorphisme. 
En particulier $\V_{\eta}$ est isotypique de type $\tau$ et il s'ensuit que 
$\V_\tau=\V_{\eta}$.
\end{proof}

\subsection{}
\label{par25}

Revenons à notre situation générale, et 
donnons des conditions nécessaires et suffisantes 
pour que $\V(\tau)\to\V_\tau$ soit un iso\-mor\-phis\-me. 

\begin{prop}
\label{WSS}
Les conditions suivantes sont équivalentes~:
\begin{enumerate}
\item
L'application $\V(\tau)\to\V_\tau$ est un isomorphisme.
\item 
L'accouplement \eqref{AccK} induit un isomorphisme entre $\Hom_\K(\V,\W)$ et 
$\Hom_\K(\W,\V)^*$.  
\item
$\V$ se décompose sous la forme $\V(\tau)\oplus\V'$
avec 
$\Hom_\K(\W,\V')=\Hom_\K(\V',\W)=\{0\}$. 
\end{enumerate}
\end{prop}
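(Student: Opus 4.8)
The plan is to reduce all three conditions to a single property of the natural $\K$-morphism $f\colon\V(\tau)\to\V_\tau$, namely that it be an isomorphism. Throughout I would use the two elementary facts recalled in \S\ref{S23}: since $\W$ is irreducible, every $\K$-morphism out of $\W$ has image inside $\V(\tau)$, so $\Hom_\K(\W,\V)=\Hom_\K(\W,\V(\tau))$, and every $\K$-morphism into $\W$ factors through $\V_\tau$, so $\Hom_\K(\V,\W)=\Hom_\K(\V_\tau,\W)$. I would also use that, $\W$ being absolutely irreducible of finite dimension, the functor $\Hom_\K(\W,-)$, with quasi-inverse $M\mapsto\W\otimes_\R M$, is an equivalence between the category of $\tau$-isotypic smooth representations of $\K$ and that of $\R$-vector spaces; under it the morphism $f$ corresponds to $\id_\W\otimes f_*$, where $f_*\colon\Hom_\K(\W,\V)\to\Hom_\K(\W,\V_\tau)$ is post-composition with $f$, so $f$ is an isomorphism (resp.\ injective, resp.\ surjective) if and only if $f_*$ is.

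To prove $(1)\Leftrightarrow(2)$ I would first use Remark \ref{rem1} to identify $\Hom_\K(\W^\vee,\V^\vee)$ with $\Hom_\K(\V,\W)$ by transposition, so that \eqref{AccK} becomes the composition pairing sending $(\h,\b)$ to the unique scalar $c$ with $\b\circ\h=c\cdot\id_\W$. Writing $\h=\iota\circ\h_0$ with $\h_0\in\Hom_\K(\W,\V(\tau))$ and $\iota\colon\V(\tau)\hookrightarrow\V$, and $\b=\b_0\circ q$ with $q\colon\V\to\V_\tau$ the projection and $\b_0\in\Hom_\K(\V_\tau,\W)$, we get $\b\circ\h=\b_0\circ f\circ\h_0=\b_0\circ(f_*\h_0)$; so under these identifications the pairing attached to $\V$ is the pull-back along $f_*$, in the first variable, of the composition pairing $\Hom_\K(\W,\V_\tau)\times\Hom_\K(\V_\tau,\W)\to\R$. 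By the discussion in \S\ref{S23} applied to the $\tau$-isotypic representation $\V_\tau$ (together with Remark \ref{rem1}), that last pairing induces an isomorphism $\Hom_\K(\V_\tau,\W)\xrightarrow{\sim}\Hom_\K(\W,\V_\tau)^*$. Hence the map $\Hom_\K(\V,\W)\to\Hom_\K(\W,\V)^*$ induced by \eqref{AccK} is this isomorphism followed by the transpose $(f_*)^*$; it is bijective exactly when $(f_*)^*$ is, i.e.\ exactly when $f_*$, hence $f$, is an isomorphism, which is $(1)$.

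For $(1)\Leftrightarrow(3)$ I would argue directly. Assuming $(1)$, I would set $\V'=\ker q$; since the restriction of $q$ to $\V(\tau)$ is $f$, we get $\V(\tau)\cap\V'=\ker f=\{0\}$, and surjectivity of $f$ shows that every $v\in\V$ differs from some element of $\V(\tau)$ by an element of $\ker q$, so $\V=\V(\tau)\oplus\V'$; moreover a $\K$-morphism $\W\to\V'$ has image a $\tau$-isotypic subrepresentation of $\V'$, hence lying in $\V(\tau)\cap\V'=\{0\}$, so $\Hom_\K(\W,\V')=\{0\}$, while any element of $\Hom_\K(\V',\W)$ extended by zero on $\V(\tau)$ lies in $\Hom_\K(\V,\W)$, hence factors through $q$ and so vanishes on $\V'=\ker q$, so $\Hom_\K(\V',\W)=\{0\}$; thus $(3)$ holds. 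Conversely, given $\V=\V(\tau)\oplus\V'$ as in $(3)$ with projection $p\colon\V\to\V(\tau)$, the representation $\V(\tau)$ being $\tau$-isotypic, $p$ factors as $g\circ q$ for some $g\colon\V_\tau\to\V(\tau)$, whence $g\circ f=g\circ q\circ\iota=p\circ\iota=\id_{\V(\tau)}$; writing $\V_\tau$ as a direct sum of copies of $\W$ shows $\Hom_\K(\V',\V_\tau)$ embeds into a product of copies of $\Hom_\K(\V',\W)=\{0\}$, so $q$ vanishes on $\V'$ and $f\circ g\circ q=f\circ p=q\circ\iota\circ p=q$, whence $f\circ g=\id_{\V_\tau}$ since $q$ is surjective; thus $f$ is an isomorphism and $(1)$ holds.

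The routine part is the manipulation of the universal properties of $\V(\tau)$ and $\V_\tau$. The one step needing real care is the bookkeeping in $(2)\Rightarrow(1)$: one must check that the map on $\Hom$-spaces induced by \eqref{AccK} is exactly $(f_*)^*$ composed with the isomorphism supplied by \S\ref{S23} for $\V_\tau$, and then that passing to transposes of $\R$-vector spaces and through the equivalence $\Hom_\K(\W,-)$ converts its bijectivity into the assertion that $f$ is an isomorphism.
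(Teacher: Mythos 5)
Your proof is correct. It differs from the paper's mainly in organization: the paper proves the cycle $1\Rightarrow2\Rightarrow3\Rightarrow1$, quoting Proposition \ref{P23} for $1\Rightarrow2$, taking $3\Rightarrow1$ as immediate, and doing the real work in $2\Rightarrow3$ by computing the two kernels of the pairing (with $\V'=\ker(\V\to\V_\tau)$, the left kernel consists of the $\h$ with image in $\V(\tau)\cap\V'$ and the right kernel of the $\psi$ vanishing on $\V(\tau)+\V'$), then deducing the decomposition of condition 3 from their vanishing. You instead prove $1\Leftrightarrow2$ and $1\Leftrightarrow3$ separately, and your key step for $2\Rightarrow1$ is the identification of the induced map $\Hom_\K(\V,\W)\to\Hom_\K(\W,\V)^*$ as the isomorphism of \S\ref{S23} for $\V_\tau$ followed by $(f_*)^*$, combined with the fact that a transpose $u^*$ is bijective exactly when $u$ is (true for arbitrary vector spaces over a field, and the one point in your argument that genuinely needs this generality). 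Both arguments rest on the same two pillars, namely Remark \ref{rem1} and the non-degeneracy statement for the isotypic representation $\V_\tau$ from \S\ref{S23}, and your complement in $(1)\Rightarrow(3)$ is the same $\ker(\V\to\V_\tau)$ that the paper uses. What your version buys is a slightly sharper output: the induced map is explicitly $(f_*)^*$ up to a fixed isomorphism, so injectivity and surjectivity of $\V(\tau)\to\V_\tau$ correspond separately to surjectivity and injectivity of the map in condition 2. What the paper's version buys is brevity, since the kernel computation delivers the direct-sum decomposition of condition 3 in one stroke rather than routing through condition 1.
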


\begin{proof}
On a déjà prouvé que la condition 1 implique la condition 2
(voir la proposition \ref{P23} et la remarque \ref{rem1}) et on 
vérifie immédiatement que la condition 3 implique la condi\-tion 1. 
Notons $\V'$ le noyau de $\V\to\V_\tau$.
Le noyau à droite de \eqref{AccK} est formé des 
$\psi\in\Hom_\K(\V,\W)$ qui sont nuls sur $\V(\tau)+\V'$ et son 
noyau à gauche est formé des $\h\in\Hom_\K(\W,\V)$ dont l'image 
est incluse dans $\V(\tau)\cap\V'$. 
Si la condition $2$ est vérifiée, on obtient donc $\V(\tau)\cap\V'=\{0\}$ 
et~:
\begin{equation*}
\Hom_\K(\V/(\V(\tau)+\V'),\W)=\{0\}.
\end{equation*}
Comme le quotient $\V/(\V(\tau)+\V')$ est isotypique de type $\tau$, 
on en déduit que $\V=\V(\tau)\oplus\V'$. 
On a aussi $\V'(\tau)=\V(\tau)\cap\V'=\{0\}$ et 
$\V'_\tau=\V/(\V(\tau)+\V') =\{0\}$,
ce qui termine la preuve. 
\end{proof}

\begin{defi}
La représentation $\V$ est dite $\W$-\textit{semi-simple} si les conditions 
équivalentes de la proposition \ref{WSS} sont vérifiées. 
\end{defi}

La propriété de $\W$-semi-simplicité n'est stable ni par passage à un 
sous-objet, ni par passage à un quotient, comme le montre l'exemple ci-dessous. 

\begin{exem}
\label{EX1}
On suppose que $\K$ est le groupe $\GL_2(k)$, 
où $k$ est un corps fini dont le 
cardinal $q$ est d'ordre $2$ dans $\mult\R$. 
On considère la représentation de $\K$, par translations à droite, 
sur l'espace $\E$ des fonctions de $\K$ dans $\R$ qui sont invariantes 
à gauche par le sous-groupe des ma\-tri\-ces triangulaires supérieures. 
Elle est indécomposable et de longueur $3$~: elle admet une unique suite 
de composition $\{0\}\subseteq\X\subseteq\Y\subseteq\E$, où $\X$ et 
$\E/\Y$ sont isomorphes au caractère trivial de $\K$ et où $\Y/\X$ est une 
représentation irréductible cuspidale de $\K$, notée $(\tau,\W)$.
Remarquons que $\X$ est le sous-espace des fonctions constantes. 

Le quotient $\V=\E/\X$ est indécomposable et de longueur $2$. 
L'espace $\V(\tau)$ est non nul, iso\-mor\-phe à $\tau$,
mais on va voir que $\V_\tau$ est réduit à $\{0\}$. 
Si ce n'était pas le cas, alors $\tau^\vee$ apparaîtrait 
comme sous-représentation de $\V^\vee$.
Or $\V^\vee$ est indécomposable, et son unique sous-représentation 
irréductible est le caractère trivial de $\K$. 

La représen\-ta\-tion $\E$ est $\W$-semi-simple, avec $\E(\tau)=\{0\}$, 
mais ni son unique quotient $\V$ de longueur $2$ ni son unique 
sous-représentation $\Y$ de longueur $2$ ne le sont.
\end{exem}

Ceci nous conduira à introduire 
dans le para\-graphe suivant une propriété plus forte. 
Signalons toutefois les propriétés suivantes. 

\begin{prop}
\begin{enumerate}
\item
On suppose que l'application $\V(\tau)\to\V_\tau$ est surjective. 
Alors pour tout quotient $\Q$ de $\V$, l'application $\Q(\tau)\to\Q_\tau$ est 
surjective.  
\item
On suppose que l'application $\V(\tau)\to\V_\tau$ est injective. 
Alors pour toute sous-repré\-senta\-tion $\X$ de $\V$, l'application 
$\X(\tau)\to\X_\tau$ est injective.  
\end{enumerate}
\end{prop}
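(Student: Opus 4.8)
The plan is to prove both statements directly from the universal properties that define $\V(\tau)$ (the largest $\tau$-isotypic subrepresentation of $\V$) and $\V_\tau$ (the largest $\tau$-isotypic quotient of $\V$), together with the elementary observation that a subrepresentation or a quotient of a $\tau$-isotypic representation is again $\tau$-isotypic. Formally the two assertions are dual to one another under $\V\mapsto\V^\vee$, which exchanges subobjects with quotients and $\V(\tau)$ with the piece controlled by $\V_\tau$; but because of the bad behaviour of $\vee$ in infinite dimension recalled in Example~\ref{EX21} (restriction of functionals to a subrepresentation need not be surjective, and $\V^\vee$ may vanish), I will argue each part by hand rather than deduce one from the other.

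For (1), write $\Q=\V/\U$ with projection $p\colon\V\to\Q$. First I would build a canonical surjection $\V_\tau\twoheadrightarrow\Q_\tau$: the composite $\V\xrightarrow{p}\Q\to\Q_\tau$ is a $\tau$-isotypic quotient of $\V$, hence factors as $\V\to\V_\tau\to\Q_\tau$, and the map $\V_\tau\to\Q_\tau$ so obtained is onto because $\V\to\Q_\tau$ is. Next, $p(\V(\tau))$ is a quotient of the $\tau$-isotypic representation $\V(\tau)$, hence $\tau$-isotypic, and it sits inside $\Q$, so $p(\V(\tau))\subseteq\Q(\tau)$. Combining these, the composite $\V(\tau)\hookrightarrow\V\to\V_\tau\to\Q_\tau$ is surjective, by the hypothesis on $\V(\tau)\to\V_\tau$ and by the surjectivity just established; but, by construction of $\V_\tau\to\Q_\tau$, this composite equals $\V(\tau)\xrightarrow{p}p(\V(\tau))\hookrightarrow\Q(\tau)\to\Q_\tau$, so it factors through the natural map $\Q(\tau)\to\Q_\tau$. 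Therefore $\Q(\tau)\to\Q_\tau$ is surjective.

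For (2), let $\X\subseteq\V$ and write $\V_0=\Ker(\V\to\V_\tau)$ and $\X_0=\Ker(\X\to\X_\tau)$; the hypothesis is precisely $\V(\tau)\cap\V_0=\{0\}$ and the conclusion is $\X(\tau)\cap\X_0=\{0\}$. On one hand $\X(\tau)$, being a $\tau$-isotypic $\K$-subrepresentation of $\V$, is contained in $\V(\tau)$. On the other hand $\X/(\X\cap\V_0)$ embeds into $\V/\V_0=\V_\tau$, hence is $\tau$-isotypic, so by the universal property of $\X_\tau$ the map $\X\to\X/(\X\cap\V_0)$ factors through $\X\to\X_\tau$, which forces $\X_0\subseteq\X\cap\V_0\subseteq\V_0$. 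Hence $\X(\tau)\cap\X_0\subseteq\V(\tau)\cap\V_0=\{0\}$, as wanted.

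I do not expect a serious obstacle: the whole content is to assemble the correct universal-property diagrams. The one step to carry out with care is the construction, in each part, of the comparison map --- the surjection $\V_\tau\twoheadrightarrow\Q_\tau$ in (1) and the inclusion $\X_0\subseteq\V_0$ in (2) --- which rests on the fact that the property of being $\tau$-isotypic passes to sub- and quotient representations. That same fact fails for the stronger property of $\W$-semi-simplicity, as Example~\ref{EX1} shows, which is exactly why only the one-sided statements of the proposition survive the passage to quotients, resp. to subobjects.
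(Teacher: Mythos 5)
Your argument is correct and is essentially the paper's own proof: in (1) you produce the canonical surjection $\V_\tau\to\Q_\tau$ and push $\V(\tau)$ through it into $\Q(\tau)$, and in (2) you show $\Ker(\X\to\X_\tau)\subseteq\Ker(\V\to\V_\tau)$ and use $\X(\tau)\subseteq\V(\tau)$, which is exactly the paper's reasoning (the paper merely phrases the surjectivity step in (1) as the identity $\V=\V(\tau)+\V'$ with $\V'=\Ker(\V\to\V_\tau)$, which is equivalent to composing the two surjections as you do). No gaps.
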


\begin{proof}
Soit $\X$ un sous-espace de $\V$ stable par $\K$, et soit $\Q$ le quotient 
de $\V$ par $\X$. 
On prouve d'abord (1).
Le quotient~:
\begin{equation*}
(\V(\tau)+\X)/\X\simeq\V(\tau)/(\X\cap\V(\tau))
\end{equation*}
est isotypique de type $\tau$~; c'est donc un sous-espace de $\Q(\tau)$. 
La composée de $\V\to\Q$ et $\Q\to\Q_\tau$ se factorise par $\V\to\V_\tau$, \ie que 
$\Q_\tau$ est un quotient de $\V_\tau$. 
Il suffit donc de prouver que l'application
$(\V(\tau)+\X)/\X\to\V_\tau$ est surjective. 
On note $\V'$ le noyau de $\V\to\V_\tau$. 
Alors par hypothèse faite sur $\V(\tau)\to\V_\tau$, 
on a $\V=\V(\tau)+\V'$, ce qui prouve la 
surjectivité voulue. 

On prouve maintenant (2). 
On note toujours $\V'$ le noyau de $\V\to\V_\tau$. 
Le quotient~:
\begin{equation*}
\X/(\X\cap\V')\simeq
(\X+\V')/\V'
\end{equation*}
est isotypique de type $\tau$~; c'est donc un quotient de $\X_\tau$.
Par hypothèse faite sur $\V(\tau)\to\V_\tau$, 
on a $\V(\tau)\cap\V'=\{0\}$.
Ainsi $\X(\tau)=\X\cap\V(\tau)$ se plonge 
dans $\X/(\X\cap\V')$, ce dont on 
déduit que l'application $\X(\tau)\to\X_\tau$ est injective.  
\end{proof}

\begin{rema}
\label{remaoplusWss}
Soit $(\V_i)_{i\in\I}$ une famille de représentations de $\K$, 
et soit $\V$ la somme directe des $\V_i$, $i\in\I$.
Les conditions suivantes sont équivalentes~:
\begin{enumerate}
\item 
Pour tout $i\in\I$, l'application $\V_i(\tau)\to\V_{i,\tau}$ est surjective 
(resp. injective).
\item
L'application $\V(\tau)\to\V_\tau$ est surjective (resp. injective).
\end{enumerate}
\end{rema}

\subsection{}
\label{fWss}

Introduisons dans ce paragraphe une propriété plus stable que celle 
de $\W$-semi-simplicité.

\begin{defi}
Une représentation lisse $\V$ de $\K$ est dite 
\emph{fortement $\W$-semi-simple} si elle 
se décompose sous la forme $\V(\tau)\oplus\V'$, 
où $\V'$ ne possède aucun sous-quotient isomorphe à $\W$. 
\end{defi}

Bien sûr, une représentation fortement $\W$-semi-simple
est $\W$-semi-simple (proposition \ref{WSS}).

\begin{prop}
\label{fWsssqs}
Tout sous-quotient d'une représentation fortement $\W$-semi-simple 
de $\K$ est fortement $\W$-semi-simple. 
\end{prop}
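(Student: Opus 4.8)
The plan is to prove that if $\V$ is strongly $\W$-semi-simple, then so is every sub-quotient. Since a sub-quotient of a sub-quotient is a sub-quotient, and since $\V(\tau)$ occurs as a direct summand, it suffices to treat separately the case of a sub-representation and the case of a quotient; equivalently, since passing to a complement of a sub-representation within the summand $\V'$ is again a sub-representation, I would reduce to showing: (a) every sub-representation of a strongly $\W$-semi-simple representation is strongly $\W$-semi-simple, and (b) every quotient of one is. In fact, thanks to the decomposition $\V = \V(\tau)\oplus\V'$, any sub-quotient of $\V$ is built from sub-quotients of $\V(\tau)$ and of $\V'$; $\V(\tau)$ is a direct sum of copies of $\W$, so its sub-quotients are again direct sums of copies of $\W$ and are trivially strongly $\W$-semi-simple (their $\V'$-part is zero), while $\V'$ has no sub-quotient isomorphic to $\W$, so the same is true of every one of its sub-quotients.

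First I would make precise the decomposition of an arbitrary sub-quotient. Let $\X \subseteq \Y \subseteq \V$ be sub-representations and consider $\M = \Y/\X$. Write $\Y(\tau) = \Y \cap \V(\tau)$ (this holds because $\V(\tau)$ is the largest $\tau$-isotypic sub-representation, hence $\Y(\tau)\subseteq\Y\cap\V(\tau)$, and conversely $\Y\cap\V(\tau)$ is $\tau$-isotypic inside $\Y$). Since $\V(\tau)$ is a direct sum of copies of $\W$ and $\R$-vector subspaces that are $\K$-stable inside a $\tau$-isotypic representation split off (every $\tau$-isotypic representation, being $\Hom_\K(\W,\V(\tau))\otimes\W$ with $\W$ absolutely irreducible, is semi-simple as a representation of $\K$ — indeed it is $\W$-semi-simple and a sum of copies of an irreducible), both $\X\cap\V(\tau)$ and its image in $\V(\tau)$ under further quotients are again sums of copies of $\W$. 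The key point will be the interaction between $\X$, $\Y$ and the summand $\V'$: I would show $\Y = (\Y\cap\V(\tau)) \oplus (\Y\cap\V')$. For this, note $\Y \supseteq \Y\cap\V(\tau)$, and the projection $\V\to\V'$ restricted to $\Y$ has image contained in $\V'$ and kernel $\Y\cap\V(\tau)$; it remains to see the image is exactly $\Y\cap\V'$, which follows because $\Y$ contains $\V(\tau)$'s contribution... actually this last inclusion can fail, so here is the main obstacle.

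The main obstacle is precisely that a sub-representation $\Y$ need not respect the direct-sum decomposition $\V = \V(\tau)\oplus\V'$, i.e. $\Y\neq(\Y\cap\V(\tau))\oplus(\Y\cap\V')$ in general. To get around this, I would argue via $\W$-semi-simplicity (Proposition~\ref{WSS}) rather than by naive intersection: the hypothesis ``$\V'$ has no sub-quotient isomorphic to $\W$'' is itself stable under sub-quotients by definition, so the only real content is controlling the $\tau$-isotypic part of an arbitrary sub-quotient $\M$. Concretely, I would show $\M(\tau)\to\M_\tau$ is an isomorphism (so $\M$ is $\W$-semi-simple), giving $\M = \M(\tau)\oplus\M'$ with $\Hom_\K(\W,\M')=\Hom_\K(\M',\W)=0$; then I must upgrade this to: $\M'$ has no sub-quotient isomorphic to $\W$. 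Suppose it did; then $\W$ would be a sub-quotient of $\M'$, hence of $\M$, hence of $\V$. Tracking this $\W$-sub-quotient of $\V$ back through the decomposition $\V=\V(\tau)\oplus\V'$: it gives either a $\W$-sub-quotient of $\V(\tau)$ or one of $\V'$ (a sub-quotient of a direct sum decomposes along the summands up to extensions whose pieces are sub-quotients of the summands — here one uses that the category has the property that a simple sub-quotient of $A\oplus B$ is a sub-quotient of $A$ or of $B$). The latter is excluded by hypothesis; so it comes from $\V(\tau)$, and then the corresponding copy of $\W$ lies in $\M(\tau)$, not in $\M'$, contradicting $\Hom_\K(\W,\M')=0$ together with the fact that a representation with $\W$ as a sub-quotient and $\Hom_\K(\W,\cdot)=\Hom_\K(\cdot,\W)=0$ must be $\{0\}$ on its... — here I would invoke Remark~\ref{remaoplusWss} and the surjectivity/injectivity parts of the preceding proposition on sub-representations and quotients to conclude that $\M(\tau)\to\M_\tau$ is an isomorphism, and conclude. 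I expect the cleanest writeup to proceed: first dispatch ``no $\W$-sub-quotient'' as manifestly inherited by sub-quotients, then prove ``$\M(\tau)\to\M_\tau$ iso'' using the surjective-for-quotients and injective-for-sub-objects statements just established, and finally assemble the splitting $\M=\M(\tau)\oplus\M'$ from $\W$-semi-simplicity.
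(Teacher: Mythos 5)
Your proposal has a genuine gap, and it occurs at the exact point where you abandon the right idea. You first write down the key claim, namely that a sub-representation $\Y$ of $\E=\E(\tau)\oplus\E'$ satisfies $\Y=(\Y\cap\E(\tau))\oplus(\Y\cap\E')$, then declare that this ``can fail'' and call it the main obstacle. It does not fail here: the quotient $\U$ of $\Y$ by $(\Y\cap\E(\tau))\oplus(\Y\cap\E')$ is simultaneously a quotient of $\Y/(\Y\cap\E')\simeq(\Y+\E')/\E'\subseteq\E/\E'\simeq\E(\tau)$, hence isotypic of type $\tau$, and a quotient of $\Y/(\Y\cap\E(\tau))\subseteq\E/\E(\tau)\simeq\E'$, hence a sub-quotient of $\E'$, which by the \emph{strong} hypothesis has no sub-quotient isomorphic to $\W$; therefore $\U=\{0\}$. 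That is the paper's proof: the lemma settles the case of a sub-representation directly (with $\Y(\tau)=\Y\cap\E(\tau)$), the case of a quotient follows by taking the images of the two summands, and a general sub-quotient is a quotient of a sub-representation. Your worry is legitimate for an arbitrary direct-sum decomposition, but the two summands here are orthogonal in the strong sense that one is $\tau$-isotypic while the other has no $\W$-sub-quotient at all, and that is precisely what makes the intersection decomposition go through.

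The replacement route you sketch does not close. Its crucial step is the upgrade from ``$\M=\M(\tau)\oplus\M'$ with $\Hom_\K(\W,\M')=\Hom_\K(\M',\W)=\{0\}$'' to ``$\M'$ has no sub-quotient isomorphic to $\W$'', and the argument you offer (``the corresponding copy of $\W$ lies in $\M(\tau)$, not in $\M'$'') is an assertion, not a proof: knowing that every $\W$-sub-quotient of $\V$ arises from the Jordan--H\"older factors of $\V(\tau)$ says nothing about where such a factor sits inside $\M'$. Worse, the implication you need is false in general: $\W$-semi-simplicity of a representation does not forbid $\W$-sub-quotients of its complement (Example~\ref{EX1} exhibits a $\W$-semi-simple $\E$ with $\E(\tau)=\{0\}$ that nevertheless has $\W$ as a sub-quotient), so no amount of juggling the vanishing of the two $\Hom$ spaces will yield the strong conclusion; one must use the hypothesis that $\V'$ has no $\W$-sub-quotient directly, which is exactly what the intersection lemma does. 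The appeal to the injectivity/surjectivity proposition is also insufficient on its own: for a sub-quotient $\Y/\X$ it gives injectivity of $\Y(\tau)\to\Y_\tau$ from the sub-object half, but not the surjectivity you would need before passing to the quotient.
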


\begin{proof}
Soit $\E$ une représentation lisse fortement $\W$-semi-simple de $\K$, 
qu'on écrit sous la forme $\E(\tau)\oplus\E'$. 

\begin{lemm}
Soit $\V$ une sous-représentation de $\E$.  
Alors $\V=(\V\cap\E(\tau))\oplus(\V\cap\E')$.
\end{lemm}

\begin{proof}
On note 
$\U$ le quotient de $\V$ par $(\V\cap\E(\tau))\oplus(\V\cap\E')$.
On va prouver que $\U$ est nul. 
D'une part, $\U$ est un quotient de~:
\begin{equation*}
\V/(\V\cap\E')\simeq(\V+\E')/\E'\subseteq\E/\E'\simeq\E(\tau)
\end{equation*}
qui est isotypique de type $\tau$.
D'autre part, $\U$ est un quotient de~:
\begin{equation*}
\V/(\V\cap\E(\tau))\simeq(\V+\E(\tau))/\E(\tau)\subseteq\E/\E(\tau)\simeq\E'
\end{equation*}
qui ne contient aucun sous-quotient irréductible isomorphe à $\tau$. 
On en déduit que $\U$ est nul.
En particulier $\V$ est fortement $\W$-semi-simple, 
avec $\V(\tau)=\V\cap\E(\tau)$ et $\V'=\V\cap\E'$. 
\end{proof}

Soit maintenant $\X$ le quotient de $\E$ par la sous-représentation $\V$. 
D'après le lemme précédent, on voit que $\X=\X(\tau)\oplus\X'$ avec 
$\X(\tau)=\E(\tau)/(\V\cap\E(\tau))$ et $\X'=\E'/(\V\cap\E')$, ce qui prouve que 
$\X$ est fortement $\W$-semi-simple.
\end{proof}

La propriété de $\W$-semi-simplicité forte est stable par sommes directes 
arbitraires~: si $(\V_i)_{i\in\I}$ est une famille de représentations de $\K$, 
et si $\V$ est la somme directe des $\V_i$, $i\in\I$, alors $\V$ est fortement 
$\W$-semi-simple si et seulement si chacune des $\V_i$, $i\in\I$, est fortement 
$\W$-semi-simple.

En revanche, cette propriété n'est pas stable par extensions~: toute représentation 
irréductible de $\K$ est fortement $\W$-semi-simple, mais une extension 
non triviale de $\tau$ par une représentation irréductible non isomorphe à 
$\tau$ (s'il en existe) ne l'est pas. 

\subsection{}

Étudions le comportement de la notion de $\W$-semi-simplicité forte 
par passage à la contragrédiente.

Pour toute représentation lisse $\E$ de $\K$
et pour toute partie $\SS$ de $\E^\vee$, on notera $\SS^\circ$ l'orthogonal 
de $\SS$ dans $\E$, \ie l'intersection des noyaux des éléments de $\SS$. 

\begin{prop}
\label{contrafWss1}
Soit $\E$ une représentation lisse fortement $\W$-semi-simple de $\K$. 
Alors sa contragrédiente $\E^\vee$ est fortement $\W^\vee$-semi-simple. 
\end{prop}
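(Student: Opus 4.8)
The plan is to reduce the statement to a duality lemma about subquotients. Recall first that for any smooth representations one has $(\V_1\oplus\V_2)^\vee=\V_1^\vee\oplus\V_2^\vee$, since a linear form on $\V_1\oplus\V_2$ has open stabilizer in $\K$ if and only if each of its two components does. Writing $\E=\E(\tau)\oplus\E'$ as in the definition of strong $\W$-semi-simplicity, we thus get $\E^\vee=\E(\tau)^\vee\oplus(\E')^\vee$, and by \S\ref{S23} the summand $\E(\tau)^\vee$ is isotypic of type $\tau^\vee$. The key point is then the following assertion: \emph{if a smooth representation $\A$ of $\K$ has no subquotient isomorphic to $\W$, then $\A^\vee$ has no subquotient isomorphic to $\W^\vee$.} Granting this for $\A=\E'$, the summand $(\E')^\vee$ has no subquotient -- a fortiori no subrepresentation -- isomorphic to $\W^\vee$, so its $\tau^\vee$-isotypic component is zero; since the $\tau^\vee$-isotypic part of a direct sum is the direct sum of the $\tau^\vee$-isotypic parts of the summands, this gives $\E^\vee(\tau^\vee)=\E(\tau)^\vee$. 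Hence $\E^\vee=\E^\vee(\tau^\vee)\oplus(\E')^\vee$ with $(\E')^\vee$ having no subquotient isomorphic to $\W^\vee$, which is precisely the statement that $\E^\vee$ is strongly $\W^\vee$-semi-simple.

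To prove the lemma I would argue contrapositively. Suppose $\W^\vee$ is a subquotient of $\A^\vee$, say $\W^\vee\simeq\A_1/\A_2$ with $\A_2\subseteq\A_1\subseteq\A^\vee$ subrepresentations. Since in a smooth representation of the profinite group $\K$ every vector is fixed by an open subgroup, hence has finite $\K$-orbit, it generates a finite-dimensional subrepresentation; replacing $\A_1$ by the subrepresentation generated by finitely many elements whose images form a basis of $\A_1/\A_2$, and then $\A_2$ by $\A_1\cap\A_2$, we may assume $\A_1$ and $\A_2$ finite-dimensional. Now regard $\A_1,\A_2$ as finite-dimensional $\K$-stable subspaces of $\A^*$, and let $\A_1^\perp\subseteq\A_2^\perp\subseteq\A$ be their orthogonals, which are $\K$-stable. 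Set $\Q=\A/\A_1^\perp$ and $\M=\A_2^\perp/\A_1^\perp\subseteq\Q$, a $\K$-stable subquotient of $\A$. A dimension count shows that $\A_1^\perp$ has finite codimension, so that $\Q$ is finite-dimensional, and that $\A_1$ is canonically identified with $\Q^*$ and $\A_2$ with the annihilator $\M^\perp$ of $\M$ inside $\Q^*$; indeed any finite-dimensional $\K$-stable subspace $\B\subseteq\A^*$ satisfies $\B=(\A/\B^\perp)^*$. Consequently $\W^\vee\simeq\A_1/\A_2\simeq\Q^*/\M^\perp\simeq\M^*$ as representations of $\K$, all the isomorphisms being $\K$-equivariant; dualizing the finite-dimensional representation $\M$ then yields $\M\simeq\W$, so $\W$ is a subquotient of $\A$, contrary to hypothesis.

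The main obstacle is precisely this lemma in the infinite-dimensional setting: the contragredient functor is not exact on smooth representations of a profinite group -- indeed $\A^\vee$ may vanish while $\A$ does not, as in Example~\ref{EX21} -- so one cannot dualize a composition series directly. The reduction to finite-dimensional subquotients followed by the orthogonality bookkeeping circumvents this, but it yields the implication only in the direction ``subquotient of the dual gives subquotient of the original''; fortunately that is exactly the direction needed to control $(\E')^\vee$. The remaining verifications -- naturality, hence $\K$-equivariance, of the orthogonality identifications, and additivity of the $\tau^\vee$-isotypic part over a direct sum -- are routine.
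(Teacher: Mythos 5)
Your proof is correct and follows essentially the same route as the paper's: the same decomposition $\E^\vee=\E(\tau)^\vee\oplus(\E')^\vee$, the same key reduction to the claim that if $\A$ has no subquotient isomorphic to $\W$ then $\A^\vee$ has none isomorphic to $\W^\vee$, and the same proof of that claim by passing to a finite-dimensional subrepresentation of $\A^\vee$ and taking orthogonals in $\A$. The paper packages the orthogonality bookkeeping as its Lemma \ref{Haller} (the injection $\T^\circ/\U^\circ\hookrightarrow(\U/\T)^\vee$, an isomorphism when $\U$ is finite-dimensional), which is exactly your identification $\A_1/\A_2\simeq\M^*$ in different notation.
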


\begin{proof}
Soit $\E$ une représentation fortement $\W$-semi-simple de 
$\K$ qu'on décompose sous la forme $\E(\tau)\oplus\E'$ où $\E'$ 
est une sous-représentation de $\E$ ne 
possèdant aucun sous-quotient isomorphe à $\W$.  
Sa contragrédiente $\E^\vee$ s'écrit $\E(\tau)^*\oplus\E'^\vee$ et on va 
montrer que $\E'^\vee$ ne possède aucun sous-quotient isomorphe à $\W^\vee$.  
On a le lemme immédiat suivant. 

\begin{lemm}
\label{Haller}
Soient $\T,\U$ des sous-espaces de $\E^\vee$ 
tels que $\T\subseteq\U$.  
Alors l'application qui à $t\in\T^\circ$ associe la forme linéaire 
$\xi\mapsto\xi(t)$ sur $\U$ induit 
un homomorphisme injectif de $\T^\circ/\U^\circ$ dans $(\U/\T)^\vee$. 
En outre, si $\U$ est de dimension finie, c'est un isomorphisme.
\end{lemm}

Supposons qu'il y a des sous-espaces $\T\subseteq\U$ de $\E'^\vee$ stables
par $\K$ et tels que le quotient $\U/\T$ est isomorphe à $\W^\vee$. 
Quitte à remplacer $\U$ par la sous-représentation de $\U$ engendrée par un vecteur 
qui est dans $\U$ mais pas dans $\T$, on peut supposer que $\U$ est de 
dimension finie. 
On a ainsi un isomorphisme
$\T^\circ/\U^\circ\simeq(\U/\T)^\vee$, ce qui contredit l'hypothèse faite sur 
$\E'$. 
\end{proof}

La réciproque n'est pas vraie en général~: si $\V$ est la représentation de 
l'exemple \ref{EX21}, alors sa contragrédiente, qui est nulle, 
est fortement $\W$-semi-simple pour n'importe quel $\W$,
tandis que $\V$ n'est fortement $\W$-semi-simple pour aucun $\W$. 
On a toutefois le résultat suivant. 

\begin{prop}
Supposons que $\K$ a un sous-groupe ouvert $\K_1$ de 
pro-ordre inversible dans $\R$. 
Alors une représentation de $\K$ est fortement $\W$-semi-simple si et 
seulement si sa contragrédiente est fortement $\W^\vee$-semi-simple.
\end{prop}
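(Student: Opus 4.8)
The plan is to obtain the two implications separately. One of them is precisely Proposition~\ref{contrafWss1}: if $\E$ is strongly $\W$-semi-simple, then $\E^\vee$ is strongly $\W^\vee$-semi-simple, and this needs no hypothesis on $\K$. For the converse I would pass to the bicontragredient. Since $\W$ is finite-dimensional and absolutely irreducible, so is $\W^\vee$, and $\W^{\vee\vee}$ is canonically identified with $\W$; hence Proposition~\ref{contrafWss1} may be applied with $\W^\vee$ playing the role of $\W$. Thus, if $\E^\vee$ is strongly $\W^\vee$-semi-simple, then $(\E^\vee)^\vee=\E^{\vee\vee}$ is strongly $\W$-semi-simple. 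It would then suffice to realise $\E$ as a subrepresentation of $\E^{\vee\vee}$: by Proposition~\ref{fWsssqs} every subquotient of a strongly $\W$-semi-simple representation is again strongly $\W$-semi-simple, and the conclusion follows.

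So the crux is to show that, under the hypothesis on $\K$, the canonical intertwiner $\E\to\E^{\vee\vee}$ is injective; equivalently, that for every nonzero $x\in\E$ there exists $\lambda\in\E^\vee$ with $\lambda(x)\neq 0$. Here is how I would argue. By smoothness the stabiliser of $x$ in $\K$ is open, hence meets $\K_1$ in an open subgroup $\J$ of $\K$; being contained in $\K_1$, the group $\J$ has pro-order invertible in $\R$. For any $v\in\E$ one may choose an open subgroup $\J'$ of $\J$, normal in $\J$, fixing $v$, and form $\frac{1}{[\J:\J']}\sum_{g\in\J/\J'}g\cdot v$; the index $[\J:\J']$ divides the pro-order of $\J$, hence is invertible in $\R$, and a refinement argument shows the result does not depend on $\J'$. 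This produces an $\R$-linear projector $e_\J\colon\E\to\E^\J$ that restricts to the identity on $\E^\J$ and satisfies $e_\J(h\cdot v)=e_\J(v)$ for all $h\in\J$ (normality of $\J'$ in $\J$ makes the reindexing $g\mapsto gh$ legitimate). Now pick a linear form $\mu$ on $\E^\J$ with $\mu(x)\neq 0$, which is possible since $0\neq x\in\E^\J$, and set $\lambda=\mu\circ e_\J$. Then $\lambda$ is fixed by $\J$, hence smooth, so $\lambda\in\E^\vee$, and $\lambda(x)=\mu(e_\J x)=\mu(x)\neq 0$, as wanted.

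The only step demanding genuine care is the construction of the averaging operator $e_\J$ — checking that it is well defined, $\R$-linear, and satisfies $e_\J(h\cdot v)=e_\J(v)$ for $h\in\J$, which is exactly the point where the assumption that $\K_1$ has pro-order invertible in $\R$ is used. (One sees here why the converse of Proposition~\ref{contrafWss1} fails without such an assumption: in Example~\ref{EX21} the map $\E\to\E^{\vee\vee}$ is the zero map.) Everything else is a formal combination of Propositions~\ref{contrafWss1} and~\ref{fWsssqs} with the elementary identification $\W^{\vee\vee}\simeq\W$.
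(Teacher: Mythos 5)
Your proof is correct and follows exactly the route of the paper's own argument: one implication is Proposition~\ref{contrafWss1}, and the converse combines Propositions~\ref{contrafWss1} and~\ref{fWsssqs} with the identification $\W^{\vee\vee}\simeq\W$ and the embedding $\E\hookrightarrow\E^{\vee\vee}$. The paper merely asserts that this embedding is injective under the hypothesis on $\K$; your averaging-operator construction supplies that justification correctly, so the only difference is that you spell out a step the paper leaves implicit.
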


\begin{proof}
La proposition \ref{contrafWss1} prouve l'une des implications. 
Par hypothèse sur $\K$, $\E$ se plonge naturellement dans son bicontragrédient 
$\E^{\vee\vee}$. 
L'implication réciproque se déduit des propositions \ref{fWsssqs} et 
\ref{contrafWss1}. 
\end{proof}

\subsection{}

Revenons finalement 
à la notion de $\W$-semi-simplicité et à la façon dont cette 
propriété se comporte par passage à la contragrédiente.

\begin{prop}
Supposons que $\K$ a un sous-groupe ouvert de 
pro-ordre inversible dans $\R$. 
Alors une représentation de $\K$ est $\W$-semi-simple si et seulement 
si sa contragrédiente est $\W^\vee$-semi-simple.
\end{prop}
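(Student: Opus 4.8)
\emph{Plan of proof.}
The plan is to reduce to criterion~(3) of Proposition~\ref{WSS}: $\V$ is $\W$-semi-simple if and only if $\V=\V(\tau)\oplus\V'$ with $\Hom_\K(\W,\V')=\Hom_\K(\V',\W)=\{0\}$. First one checks that this condition on $\V'$ is essentially stable under passing to the contragredient. By transposition (Remark~\ref{rem1}, and the same argument with $\W^\vee$ in place of $\W$) there are natural isomorphisms $\Hom_\K(\W^\vee,\V'^\vee)\simeq\Hom_\K(\V',\W)$ and $\Hom_\K(\V'^\vee,\W^\vee)\simeq\Hom_\K(\W,\V'^{\vee\vee})$. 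Since the hypothesis on $\K$ makes $\V'\hookrightarrow\V'^{\vee\vee}$ injective, the vanishing of the two right-hand members forces that of $\Hom_\K(\W,\V')$ and $\Hom_\K(\V',\W)$. The crucial point, say $(\star)$, is the converse: \emph{if $\V'$ has no subrepresentation and no quotient isomorphic to $\W$, then $\V'^{\vee\vee}$ has no subrepresentation isomorphic to $\W$}, that is (again by Remark~\ref{rem1}) $\Hom_\K(\W,\V'^{\vee\vee})=\{0\}$.

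Granting $(\star)$, the first implication is immediate: writing $\V=\V(\tau)\oplus\V'$ one gets $\V^\vee=\V(\tau)^\vee\oplus\V'^\vee$, where $\V(\tau)^\vee$ is isotypic of type $\tau^\vee$ (since $\V(\tau)\simeq\Hom_\K(\W,\V)\otimes\W$ and $\tau$ factors through a finite quotient of $\K$) while $\V'^\vee$ has, by the above together with $(\star)$, no subrepresentation and no quotient isomorphic to $\W^\vee$; in particular $(\V^\vee)(\tau^\vee)=\V(\tau)^\vee$, and Proposition~\ref{WSS}(3) gives that $\V^\vee$ is $\W^\vee$-semi-simple. For the converse, apply this to $\V^\vee$ and $\W^\vee$: then $\V^{\vee\vee}$ is $\W$-semi-simple, say $\V^{\vee\vee}=\V^{\vee\vee}(\tau)\oplus\U$ with $\Hom_\K(\W,\U)=\Hom_\K(\U,\W)=\{0\}$. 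As $\W$-semi-simplicity does not pass to subobjects (Example~\ref{EX1}), one descends along $\V\hookrightarrow\V^{\vee\vee}$: put $\V_1=\V\cap\V^{\vee\vee}(\tau)$ and $\V_2=\V\cap\U$, check that $\V_1=\V(\tau)$ (it is isotypic of type $\tau$ and contains $\V(\tau)$), then that $\V=\V_1\oplus\V_2$ --- this last point by using once more the semi-simplicity of the restriction to $\K_1$, which already splits $\V\hookrightarrow\V^{\vee\vee}$ over $\K_1$ compatibly with isotypic components --- and finally that $\Hom_\K(\W,\V_2)=\Hom_\K(\V_2,\W)=\{0\}$; Proposition~\ref{WSS}(3) then concludes.

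It remains to prove $(\star)$. I would argue by a reduction using the decomposition of $\V'|_{\K_1}$, which is semi-simple: the only $\K_1$-types involved are those in the $\K$-orbit --- a finite one --- of the $\K_1$-types occurring in $\W$, and replacing $\V'$ by the sum of the corresponding isotypic components (which changes neither $\Hom_\K(\W,-)$ nor $\Hom_\K(-,\W)$, nor, via the same reduction applied to $\V'^\vee$, the relevant bicontragredient) one may assume that $\V'|_{\K_1}$ involves only finitely many $\K_1$-types. On that subcategory the contragredient is computed $\K_1$-isotypic component by component, and $\V'\hookrightarrow\V'^{\vee\vee}$ is split over $\K_1$; one may then compare the cosocle of $\V'^\vee$ with the socle of $\V'$: an irreducible quotient of type $\tau^\vee$ of $\V'^\vee$ yields, by duality (Lemma~\ref{Haller}), a subrepresentation of type $\tau$ of $\V'^{\vee\vee}$, which this reduction traces back to a subrepresentation of type $\tau$ of $\V'$ --- excluded by hypothesis.

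The main obstacle is precisely $(\star)$: controlling the subrepresentations isomorphic to $\W$ of a bicontragredient in terms of the original representation. This is where the hypothesis on $\K$ is indispensable --- and exactly where the representation of Example~\ref{EX21} fails ---: it guarantees the injectivity of $\V'\to\V'^{\vee\vee}$ and, above all, it localizes the problem to smooth representations whose restriction to $\K_1$ involves only finitely many isotypic types, a category on which the contragredient is tame enough to transport the $\W$-semi-simple decomposition across that embedding.
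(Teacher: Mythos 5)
Your overall architecture matches the paper's: reduce via criterion (3) of Proposition \ref{WSS} to the complement $\V'$, dispose of $\W^\vee$-subrepresentations of $\V'^\vee$ by transposition ($\Hom_\K(\W^\vee,\V'^\vee)\simeq\Hom_\K(\V',\W)=\{0\}$), isolate as the hard point the absence of $\W^\vee$-quotients of $\V'^\vee$ (equivalently, of $\W$-subrepresentations of $\V'^{\vee\vee}$), and reduce to the case where $\K_1$ is normal, acts through finitely many types, and hence where everything is a module over the finite-dimensional algebra $\A=\R[\K/\J]$ for some open normal $\J\subseteq\K_1$. But the step you yourself flag as $(\star)$ is exactly where your text stops being a proof. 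The phrase \og which this reduction traces back to a subrepresentation of type $\tau$ of $\V'$\fg{} is an assertion, not an argument: the splitting of $\V'\hookrightarrow\V'^{\vee\vee}$ over $\K_1$ is not $\K$-equivariant, and a copy of $\W$ inside the bidual has no reason to meet $\V'$ or to lie in any $\K$-stable complement of it. What is actually needed is that for an arbitrary (possibly infinite-dimensional) $\A$-module $\M$ with radical $\rr\M$, the cosocle of $\M^*$ is exactly the dual of the socle $\M[\rr]$; this is Lemma \ref{l27} of the paper, $\rr\M^*=\M[\rr]^\circ$. The inclusion $\rr\M^*\subseteq\M[\rr]^\circ$ is formal, and for finite-dimensional $\M$ it is an equality by counting; the reverse inclusion in general is the real content, and the paper obtains it by writing $\M$ as a filtered union of finite-dimensional submodules $\M_i$ and proving that $\rr\M^*\to\varprojlim\rr\M_i^*$ is surjective via the theorem on projective limits of nonempty affine subspaces (a Mittag--Leffler-type compactness argument). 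Without this input, $\M^*$ could a priori have an irreducible quotient of type $\tau^\vee$ that is invisible in the socle of $\M$ --- precisely the phenomenon you must exclude, and precisely what fails without some hypothesis on $\K$ (compare Example \ref{EX21}).

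A secondary gap lies in your converse direction: from $\V^{\vee\vee}=\V^{\vee\vee}(\tau)\oplus\U$ you claim $\V=(\V\cap\V^{\vee\vee}(\tau))\oplus(\V\cap\U)$. The intersection lemma of this kind (inside the proof of Proposition \ref{fWsssqs}) requires $\U$ to have no \emph{subquotient} isomorphic to $\W$, whereas here $\U$ is only known to have no $\W$-subrepresentation and no $\W$-quotient; Example \ref{EX1} shows that $\W$-semi-simplicity does not descend to subrepresentations in general, so this descent needs a genuine justification (your appeal to $\K_1$-semi-simplicity does not separate $\V^{\vee\vee}(\tau)$ from $\U$, since $\U$ may well contain nonzero $\K_1$-isotypic components of the same types as $\W$). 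Both gaps would be closed by establishing the clean equivalence \og $\E$ has no $\W$-sub and no $\W$-quotient if and only if $\E^\vee$ has no $\W^\vee$-sub and no $\W^\vee$-quotient\fg, which is the single statement the paper reduces to, and whose nontrivial half is Lemma \ref{l27}.
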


\begin{proof}
Étant donnée une représentation lisse $\E$ de $\K$ qui n'a $\W$ ni 
pour sous-repré\-sen\-ta\-tion ni pour quotient, 
il s'agit de montrer que $\W^\vee$ n'est ni sous-représentation 
ni quotient de $\E^\vee$. 
Supposons que $\E^\vee$ a une sous-représentation $\U$ isomorphe 
à $\W^\vee$.
D'après le lemme \ref{Haller} avec $\T=\{0\}$, 
on obtient un isomorphisme $\E/\U^\circ\simeq\U^\vee$, ce qui 
contre\-dit le fait que $\E$ n'a pas de quotient isomorphe à $\W$. 
Si $\E$ est de dimension finie sur $\R$, on peut encore raisonner 
de façon analogue pour prouver que $\E^\vee$ n'a pas de quotient isomorphe 
à $\W^\vee$.

Il reste à prouver que $\E^\vee$ n'a pas de quotient isomorphe à $\W^\vee$ 
dans le cas où $\E$ n'est pas de dimension finie sur $\R$.
Puisque $\W$ est lisse et de dimension finie, on peut supposer que 
$\K$ possède un sous-groupe ouvert distingué $\K_1$ de pro-ordre 
inversible dans $\R$, et que $\K_1$ agit trivialement sur $\W$. 
Écrivons $\E=\E^{\K_1}\oplus\E_1$ où $\E^{\K_1}$ désigne l'espace des 
vecteurs de $\E$ invariants par $\K_1$ et où $\E_1$ n'a aucun sous-quotient 
isomorphe à $\W$. 
Quitte à remplacer $\K$ par $\K/\K_1$ et $\E$ par $\E^{\K_1}$, 
on peut supposer que $\K$ est un groupe fini. 

L'algèbre de groupe $\A=\R[\K]$ est de dimension finie sur $\R$, 
en particulier elle est artinienne. 
On note $\rr$ son radical. 
D'après le corollaire du §10, n°2 de \cite{BourbakiALG}, 
pour tout $\A$-module $\M$, le radical de $\M$ 
(\ie le plus petit sous-module $\N$ tel que $\M/\N$ soit semi-simple) 
est égal à $\rr\M$.
On note $\M[\rr]$ le plus grand sous-module 
semi-simple de $\M$.  

\begin{lemm}
\label{l27}
Soit $\M$ un $\A$-module.
Alors $\rr\M^*$ est égal à l'orthogonal 
de $\M[\rr]$ dans $\M^*$.  
\end{lemm}

\begin{proof}
D'abord, la restriction des formes linéaires de $\M$ à $\M[\rr]$ induit 
un isomorphis\-me de $\M^*/\M[\rr]^\circ$ sur $\M[\rr]^*$.
Ce dernier est semi-simple, ce qui implique que $\rr\M^*\subseteq\M[\rr]^\circ$. 
En outre, si $\M$ est de dimension finie sur $\R$, cette inclusion est 
une égalité. 

Écrivons $\M$ comme la réunion de sous-modules $\M_i$ 
de dimension finie sur $\R$, pour $i\in\I$. 
Alors $\M^*$ est la limite projective des $\M_i^*$ 
(si $\M_i^{}\subseteq\M_j^{}$, on a un morphisme surjectif de $\M_j^*$ dans 
$\M_i^*$ défini par restriction des formes linéaires) et $\M[\rr]^\circ$ est 
la limite projective des $\M^{}_i[\rr]^\circ$. 
Pour tout $i\in\I$, on a le dia\-gram\-me commutatif suivant~:
\begin{equation*}
\begin{matrix}
\rr\M^*&\subseteq&\M[\rr]^\circ\\
\downarrow&&\downarrow\\
\rr\M^*_i&=&\M_i^{}[\rr]^\circ
\end{matrix}
\end{equation*}
et il s'agit donc de prouver que $\rr\M^*$ est la limite projective des 
$\rr\M_i^*$. 
Plus précisément, on a un homomorphisme 
injectif $\xi$ de $\rr\M^*$ vers la li\-mi\-te projective des $\rr\M_i^*$, et 
il s'agit de montrer que $\xi$ est surjectif. 

Choisissons une base $(a_1,\dots,a_k)$ de $\rr$ comme espace vectoriel 
sur $\R$. 
Pour chaque $i\in\I$, on a une application linéaire $f^{}_i$ de $(\M_i^*)^k$ dans 
$\M_i^*$ définie par~:
\begin{equation*}
f_i(m_1,\dots,m_k)=a_1m_1+\dots+a_km_k,
\end{equation*}
d'image $\rr\M_i^*$.
De façon analogue, on a une application linéaire $f$ de $(\M^*)^k$ dans 
$\M^*$ dont l'image est $\rr\M^*$.
Soit $x\in\M^*$ et soit $x_i$ son image dans $\M_i^*$.
On suppose que $x_i\in\rr\M_i^*$ pour chaque $i\in\I$. 
Alors $f_i^{-1}(x^{}_i)$ est un sous-espace affine non vide de $(\M_i^*)^k$ 
de direction $\Ker(f_i)$.
D'après le théorème 1 de \cite{BourbakiENS}, chapitre III, §7, n°4, la limite projective 
des $f_i^{-1}(x^{}_i)$ est non vide, \ie que l'équation $f(y)=x$ admet une 
solution dans $(\M^*)^k$. 
\end{proof}

Si l'on applique le lemme à $\M=\E$, on obtient un isomorphisme de 
$\E^*/\rr\E^*=\E^*/\E[\rr]^\circ$ dans $\E[\rr]^*$. 
Comme $\W$ n'apparaît pas dans $\E[\rr]$, on en déduit 
que $\W^\vee$ n'apparaît pas comme quotient de $\E^*$.
\end{proof}

\section{Modules d'entrelacement}

\subsection{}

On suppose maintenant que $\G$ est un groupe localement profini et que $\K$ 
est un sous-groupe ouvert et compact de $\G$.
On fixe comme précédemment une représentation lisse absolument irréductible 
$(\tau,\W)$ de $\K$.
On note $\Hh=\Hh(\G,\K,\W)$ l'algèbre d'entrelacement -- ou algèbre de Hecke -- 
de $\tau$ dans $\G$~: on la voit comme formée des fonctions $\T$ de $\G$ 
dans $\End_\R(\W)$, à support compact, telles que~:
\begin{equation*}
\T(hgk)=\tau(h)\circ\T(g)\circ\tau(k)
\end{equation*}
pour $g\in\G$ et $h,k\in\K$. 
Cette formule signifie que $\T(g)$ entrelace $\tau$ avec $\tau^{g}$.
La loi de composition est la loi de convolution~:
\begin{equation*}
\T*\SS(g)=\sum\limits_{\K\backslash \G} \T(gx^{-1})\circ\SS(x)
\end{equation*}
où la somme porte sur un système quelconque de représentants $x$ de 
$\K\backslash\G$ dans $\G$.

On note $\Hh^\vee=\Hh(\G,\K,\W^\vee)$ l'algèbre de Hecke de $\tau^\vee$ dans 
$\G$.
L'application qui à $\T$ associe $\T^\vee:g\mapsto\T(g^{-1})^\vee$ 
(où $\T(g^{-1})^\vee$ désigne l'endomorphisme transposé de $\T(g^{-1})$) est, 
comme on le vérifie aussitôt, un anti-iso\-morphisme de $\R$-algèbres de $\Hh$ 
sur $\Hh^\vee$.  

\subsection{}
\label{calculs}

Soit $(\pi,\V)$ une représentation lisse de $\G$.
L'espace d'entrelacement $\Hom_\K(\W,\V)$ est alors un $\Hh$-module à droite~: 
pour $\h\in\Hom_\K(\W,\V)$ et $\T\in\Hh$, on a~:
\begin{equation*}
\h \T=\sum\limits_{\K\backslash \G} \pi(x^{-1})\circ\h\circ \T(x),
\end{equation*}
la somme portant sur un système quelconque de représentants $x$ 
de $\K\backslash \G$ dans $\G$.
De la même façon, $\Hom_\K(\W^\vee,\V^\vee)$ est un $\Hh^\vee$-module 
à droite.
C'est également, \textit{via} l'anti-isomorphisme $\T\mapsto\T^\vee$, un 
$\Hh$-module à gauche, et il est naturel de se demander si 
l'accouplement~: 
\begin{equation}
\label{accGH}
\Hom_\K(\W,\V)\times\Hom_\K(\W^\vee,\V^\vee)\to\R
\end{equation}
est hermitien, au sens où~:
\begin{equation*}
\label{autodual}
\langle\phi\T,\psi\rangle=
\langle\phi,\T\psi\rangle
\end{equation*}
pour tous $\phi\in\Hom_\K(\W,\V)$, $\psi\in\Hom_\K(\W^\vee,\V^\vee)$ 
et $\T\in\Hh$, où $\T\psi$ désigne bien sûr $\psi\T^\vee$.

Il sera commode d'utiliser la remarque \ref{rem1} et de considérer plutôt 
l'accouplement~:
\begin{equation}
\label{accVS}
\Hom_\K(\W,\V)\times\Hom_\K(\V,\W)\to\R.
\end{equation}
L'action à gauche de $\Hh$ sur $\Hom_\K(\V,\W)$ est alors donnée par~:
\begin{equation}
\label{actGH}
\T\psi=\sum\limits_{\K\backslash \G} \T(x^{-1})\circ\psi\circ\pi(x)
\end{equation}
pour $\psi\in\Hom_\K(\V,\W)$ et $\T\in\Hh$, 
la somme portant sur un système quelconque de représentants de 
$\K\backslash \G$ dans $\G$.

\begin{rema}
Comme l'induction lisse de $\K$ à $\G$ est adjointe à droite 
de la restriction de $\G$ à $\K$, l'espace $\Hom_{\K}(\V,\W)$ est 
naturellement un module à gauche sur l'algèbre des 
endomor\-phismes de l'induite lisse $\Ind^\G_\K(\tau)$, notée $\HH$.  
Le produit de convolution~:
\begin{equation*}
\T*f:g\mapsto\sum\limits_{\K\backslash \G} \T(x^{-1})(f(xg))
\end{equation*} 
pour $\T\in\Hh$, $f\in\Ind^\G_\K(\tau)$ et $g\in\G$,
la somme portant sur un système quelconque de représentants de 
$\K\backslash \G$ dans $\G$, 
fait de cette induite lisse un module à gauche sur $\Hh$ 
(on vérifie immédiatement que $(\T*\SS)*f=\T*(\SS*f)$ pour tous 
$\T,\SS\in\Hh$).
Ceci définit un morphisme 
de $\R$-algèbres $i$ de $\Hh$ dans $\HH$.
La restriction du $\HH$-module $\Hom_{\K}(\V,\W)$ à $\Hh$ induite 
par le morphisme $i$ coïncide avec la structure de $\Hh$-module à gauche sur 
$\Hom_{\K}(\V,\W)$ définie par \eqref{actGH}.
En effet, l'action de $\HH$ sur $\Hom_{\K}(\V,\W)$ est donnée par~:
\begin{equation*}
a\psi:v\mapsto a(\widetilde{\psi}(v))(1),
\end{equation*}
pour $a\in\HH$, $\psi\in\Hom_{\K}(\V,\W)$, $v\in\V$, 
où $\widetilde{\psi}(v)$ 
est l'élément de $\Ind^\G_\K(\tau)$ défini par 
$g\mapsto\psi(\pi(g)v)$.
Par conséquent, pour $\T\in\Hh$, on a~:
\begin{equation*}
i(\T)\psi:v\mapsto i(\T)(\widetilde{\psi}(v))(1)
=(\T*\widetilde{\psi}(v))(1)
=\sum\limits_{\K\backslash \G} \T(x^{-1})(\psi(\pi(x)v)
\end{equation*}
\ie que $i(\T)\psi$ est égal à $\T\psi$ d'après la formule \eqref{actGH}. 
\end{rema}

Il s'agit donc de comparer $\psi\circ(\h\T)$ et $(\T\psi)\circ\phi$ pour 
$\phi\in\Hom_\K(\W,\V)$ et $\psi\in\Hom_\K(\V,\W)$. 
Fixons de tels $\h$ et $\psi$.

\begin{lemm}
\label{Intertwining}
\begin{enumerate}
\item 
Soit $g\in\G$. 
L'opéra\-teur~:
\begin{equation*}
\E(\h,\psi,g)=\psi\circ\pi(g)\circ\h.
\end{equation*} 
entrelace $\tau$ avec $\tau^g$. 
\item
L'application $\E:g\mapsto\E(\phi,\psi,g)$ de $\G$ dans
$\End_\R(\W)$ vérifie~:
\begin{equation}
\label{condHT}
\E(kgk')=\tau(k)\circ\E(g)\circ\tau(k')
\end{equation}
pour tous $k,k'\in\K$ et $g\in\G$.
\end{enumerate}
\end{lemm}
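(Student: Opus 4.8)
The plan is to verify both assertions by direct manipulation, using only the defining intertwining properties of $\h$, $\psi$ and the convolution action $\eqref{actGH}$. Fix $\h\in\Hom_\K(\W,\V)$ and $\psi\in\Hom_\K(\V,\W)$ throughout.

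\medskip

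For part (1), fix $g\in\G$ and write $\K^g=g^{-1}\K g$, so that $\tau^g$ is the representation of $\K^g$ on $\W$ given by $k\mapsto\tau(gkg^{-1})$. I would check that $\E(\h,\psi,g)=\psi\circ\pi(g)\circ\h$ satisfies $\E(\h,\psi,g)\circ\tau^g(k)=\tau(gkg^{-1})\circ\E(\h,\psi,g)$ for $k\in\K\cap\K^g$ (this is the precise sense in which it intertwines $\tau$ with $\tau^g$). This is a one-line computation: since $\h$ is $\K$-equivariant, $\h\circ\tau^g(k)=\h\circ\tau(gkg^{-1})=\pi(gkg^{-1})\circ\h$ when $gkg^{-1}\in\K$; then $\pi(g)\circ\pi(gkg^{-1})=\pi(gk)=\pi(gkg^{-1})\circ\pi(g)$ trivially, and finally $\psi$ being $\K$-equivariant absorbs the remaining $\pi(gkg^{-1})=\pi(gkg^{-1})$ on the left as $\tau(gkg^{-1})\circ\psi$. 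Chaining these three equalities gives the claim. No obstacle here; it is pure bookkeeping with conjugation.

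\medskip

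For part (2), I would compute $\E(xgx')=\psi\circ\pi(xgx')\circ\h$ for $x,x'\in\K$ and $g\in\G$. Since $\pi(xgx')=\pi(x)\circ\pi(g)\circ\pi(x')$, and since $\psi\in\Hom_\K(\V,\W)$ gives $\psi\circ\pi(x)=\tau(x)\circ\psi$ while $\h\in\Hom_\K(\W,\V)$ gives $\pi(x')\circ\h=\h\circ\tau(x')$, we obtain
\begin{equation*}
\E(xgx')=\tau(x)\circ\psi\circ\pi(g)\circ\h\circ\tau(x')=\tau(x)\circ\E(g)\circ\tau(x'),
\end{equation*}
which is exactly $\eqref{condHT}$. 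This shows $\E$ transforms on the left and right under $\K$ exactly like an element of $\Hh(\G,\K,\W)$ would, the only difference being that $\E$ need not have compact support (nor need it be an element of $\Hh$ at all). The main subtlety — really the only thing to be careful about — is notational consistency: the statement writes $\E(\phi,\psi,g)$ in part (2) whereas part (1) writes $\E(\h,\psi,g)$, but since $\phi$ and $\h$ denote the same fixed element this is harmless. I expect no genuine obstacle in either part; the lemma is a formal consequence of equivariance, and its role is simply to record that the "matrix coefficient'' operators $\E(\h,\psi,g)$ behave like Hecke-type kernels, which will be exploited in the subsequent comparison of $\psi\circ(\h\T)$ and $(\T\psi)\circ\h$ via the formulas $\eqref{EA}$ and $\eqref{EB}$ announced in the introduction.
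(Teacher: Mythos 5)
Your part (2) is correct and is exactly the \emph{calcul imm\'ediat} the paper has in mind: write $\pi(xgx')=\pi(x)\circ\pi(g)\circ\pi(x')$ and absorb $\pi(x)$ into $\psi$ and $\pi(x')$ into $\h$ by their $\K$-equivariance. Part (1), however, contains a genuine error. With your own convention $\tau^g(k)=\tau(gkg^{-1})$ for $k\in\K^g=g^{-1}\K g$, the identity you propose to verify, namely $\E(\h,\psi,g)\circ\tau^g(k)=\tau(gkg^{-1})\circ\E(\h,\psi,g)$, reads $\E\circ\tau^g(k)=\tau^g(k)\circ\E$: this is a commutation relation, not the intertwining relation, which is $\E\circ\tau(k)=\tau^g(k)\circ\E$ for $k\in\K\cap\K^g$ (the source of an element of $\Hom_{\K\cap\K^g}(\tau,\tau^g)$ carries $\tau$, the target carries $\tau^g$). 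Moreover, the computation you give to establish it relies on the identity $\pi(g)\circ\pi(gkg^{-1})=\pi(gk)$, which is false: the left-hand side is $\pi(g^2kg^{-1})$, whereas it is $\pi(gkg^{-1})\circ\pi(g)$ that equals $\pi(gk)$. As written, the argument therefore does not go through.

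The correct chain --- the one the paper uses --- starts from the other side: writing $\E(g)$ for $\E(\h,\psi,g)$, one has, for all $x\in\K\cap g^{-1}\K g$,
\begin{equation*}
\E(g)\circ\tau(x)=\psi\circ\pi(g)\circ\pi(x)\circ\h
=\psi\circ\pi(gxg^{-1})\circ\pi(g)\circ\h
=\tau(gxg^{-1})\circ\E(g),
\end{equation*}
using $\pi(g)\pi(x)=\pi(gx)=\pi(gxg^{-1})\pi(g)$ together with the $\K$-equivariance of $\h$ (to move $\tau(x)$ past $\h$) and of $\psi$ (to extract $\tau(gxg^{-1})$ on the left, which is legitimate since $gxg^{-1}\in\K$). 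Your underlying idea --- pure bookkeeping with equivariance and conjugation --- is the right one and matches the paper's, but you must restate the relation to be proved and remove the false group-element identity before the verification is valid.
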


\begin{rema}
La restriction de $\E$ à chaque double classe $\K g\K$ de $\G$ est dans $\Hh$.
\end{rema}

\begin{proof}
On pose $\E(g)=\E(\h,\psi,g)$.
Pour tout $k\in \K\cap g^{-1}\K g$, on a~:
\begin{eqnarray*}
\tau(gkg^{-1})\circ\E(g)&=&\psi\circ\pi(gkg^{-1})\circ\pi(g)\circ\h\\
&=&\psi\circ\pi(g)\circ\pi(k)\circ\h\\
&=&\E(g)\circ\tau(k),
\end{eqnarray*}
ce qui prouve le point 1.
Le point 2 se déduit d'un calcul immédiat. 
\end{proof}

Soit $\T\in\Hh$.
On a les formules~:
\begin{eqnarray*}
\psi\circ\h\T&=&\sum\limits_{\K\backslash \G} \E(\phi,\psi,x^{-1})\circ \T(x),\\
\T\psi\circ\phi&=&\sum\limits_{\K\backslash \G} \T(x^{-1})\circ \E(\phi,\psi,x)
\end{eqnarray*}
dans $\End_\K(\W)$, la somme portant sur un système quelconque de 
représentants de $\K\backslash \G$ dans $\G$.
Ces deux quantités sont des homothéties de $\W$, \ie des endomorphismes 
de $\W$ de la forme $\l\cdot{\rm id}_\W$ avec $\l\in\mult\R$.

Pour comparer ces deux quantités, il 
suffit de le faire pour $\T$ supporté par une seule double classe $\K g\K$. 
Dans ce cas, les formules ci-dessus deviennent~:
\begin{eqnarray}
\label{EA}
\psi\circ\h \T&=&
\sum\limits_{(\K\cap\K^g)\backslash \K}\tau(x^{-1})\circ \E(\phi,\psi,g^{-1})\circ \T(g)\circ\tau(x),\\
\label{EB}
\T\psi\circ\phi&=&
\sum\limits_{(\K\cap\K^{g^{-1}})\backslash \K}\tau(x^{-1})\circ \T(g)\circ \E(\phi,\psi,g^{-1})\circ\tau(x)
\end{eqnarray}
dans $\End_\K(\W)$, où l'on a noté $\K^g=g^{-1}\K g$ pour $g\in\G$ et où 
les sommes portent sur des systè\-mes de représentants des ensembles-quotients 
indiqués. 

\begin{theo}
\label{dimpreml}
Si la dimension de $\W$ est première à l'exposant caractéristique de 
$\R$, alors l'accouplement \eqref{accGH}
est hermitien quelle que soit la représentation lisse $(\pi,\V)$ de $\G$. 
\end{theo}

\begin{proof}
En effet, les termes des deux sommes \eqref{EA} et \eqref{EB} ont la même 
trace, et les deux sommes ont le même nombre de termes, à savoir le nombre de 
classes de $\K$ dans $\K g\K$, à droite ou à gauche. 
Comme les deux sommes sont des homothéties de $\W$, et comme 
la dimension de $\W$ est première à l'exposant caractéristique de $\R$, 
le résultat s'ensuit. 
\end{proof}

\subsection{}

Le théorème précédent appelle un certain nombre de remarques. 
\begin{enumerate}
\item
Nous n'avons pas fait d'hypothèse sur la non-dégénérescence de 
l'accouplement, qui ne semble guère reliée avec le fait qu'il soit hermitien. 
\item
Nous ne donnons pas d'exemple où l'accouplement ne soit pas hermitien. 
Si l'on revient au lemme \ref{Intertwining}, il paraît peu 
vraisemblable qu'on ait toujours $\E(g^{-1})\circ\T(g)=\T(g)\circ\E(g^{-1})$. 
Cependant, une situation fréquente en théorie des types \cite{BKl,MSt} est celle 
où~: 
\begin{equation*}
\dim\Hom_{\K\cap\K^g}(\tau,\tau^g)\<1
\end{equation*}
pour tout $g\in\G$.
Alors $\E(g^{-1})$ et $\T(g)$ sont pris chacun dans un espace vectoriel de 
rang $1$, et on peut penser qu'il y a plus de chances qu'ils commutent~: 
voir la section \ref{S4}. 
\item
Supposons que $(\pi,\V)$ est telle que $\V(\tau)$ et $\V_\tau$ soient 
irréductibles. 
Alors ou bien l'appli\-cation $\V(\tau)\to\V_\tau$ est nulle et l'accouplement 
est nul également, donc hermitien.
Ou bien cette application est bijective, 
et l'accouplement est non dégénéré~;
alors $\Hom_\K(\W,\V)$ et $\Hom_\K(\V,\W)$ sont de dimension $1$, 
et pour prouver que l'accouplement est hermitien, il suffit de prouver que 
les quantités 
\eqref{EA} et \eqref{EB} sont égales quand $\h$ et $\psi$ sont des bases de 
ces espaces, telles qu'on ait $\psi\circ\h={\rm id}_\W$. 
En ce cas, il y a des caractères $\A$ et $\B$ de l'algèbre $\Hh$ tels que~:
\begin{equation*}
\psi\circ\h\T=\A(\T)\cdot{\rm id}_\W,
\quad
\T\psi\circ\h=\B(\T)\cdot{\rm id}_\W,
\end{equation*}
pour tout $\T\in\Hh$, et il suffit de prouver $\A(\T)=\B(\T)$ pour $\T$
parcourant un système générateur de cette algèbre. 
\end{enumerate}

\subsection{}
\label{hyp12enonce}

Notons $\HT$ l'espace des fonctions de $\G$ dans $\End_\R(\W)$ 
vérifiant la condition \eqref{condHT}.
Identifions $\Hh$ au sous-espace 
de $\HT$ formé des fonctions à support compact. 
Alors $\HT$ est naturellement un module à droite et à gauche sur $\Hh$~:
pour $f\in\HT$ et $\T\in\Hh$, on a~:
\begin{eqnarray*}
f*\T:g&\mapsto&\sum\limits_{\K\backslash \G} f(gx^{-1})\circ\T(x),\\
\T*f:g&\mapsto&\sum\limits_{\K\backslash \G}\T(gx^{-1})\circ f(x),
\end{eqnarray*}
la somme portant sur un système quelconque de représentants de 
$\K\backslash \G$ dans $\G$ (on vérifie im\-mé\-diatement que l'on a 
$f*(\T*\SS)=(f*\T)*\SS$ et $(\T*\SS)*f=\T*(\SS*f)$ pour tout $f\in\HT$ 
et tous $\T,\SS\in\Hh$).

\begin{lemm}
\label{dg1}
Soit $\T\in\Hh$ supporté par une seule double classe $\K g\K$, 
et soient $f,f'\in\HT$. 
On suppose que $f(g^{-1})=f'(g^{-1})$. 
Alors $f*\T(1)=f'*\T(1)$ et $\T* f(1)=\T* f'(1)$. 
\end{lemm}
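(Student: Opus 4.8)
The plan is to compute both $f*\T(1)$ and $\T*f(1)$ directly from the convolution formulas, using that $\T$ is supported on the single double coset $\K g\K$ to collapse each sum over $\K\backslash\G$ into a finite sum indexed by a system of representatives of $(\K\cap\K^g)\backslash\K$ (resp.\ $(\K\cap\K^{g^{-1}})\backslash\K$), and then invoking the $\K\times\K$-equivariance \eqref{condHT} of $f$ and of $\T$ to pull the single values $f(g^{-1})$ and $\T(g)$ out of every summand. The point is that, after this reduction, each of the two elements of $\End_\R(\W)$ in question depends on $f$ only through $f(g^{-1})$, which is exactly the hypothesis.

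In more detail, for the first identity I would start from $f*\T(1)=\sum_h f(h^{-1})\circ\T(h)$, the sum over representatives $h$ of $\K\backslash\G$. Since $\T$ vanishes off $\K g\K$, only the $h$ lying in $\K g\K$ contribute, and the left cosets contained in $\K g\K$ are precisely the $\K g x$ with $x$ running over $(\K\cap\K^g)\backslash\K$; taking $h=gx$ and applying \eqref{condHT} to $\T$ gives $\T(gx)=\T(g)\circ\tau(x)$, while applying it to $f$ gives $f(x^{-1}g^{-1})=\tau(x^{-1})\circ f(g^{-1})$. Hence
\[
f*\T(1)=\sum_{x\in(\K\cap\K^g)\backslash\K}\tau(x^{-1})\circ f(g^{-1})\circ\T(g)\circ\tau(x),
\]
which involves $f$ only via $f(g^{-1})$, so $f*\T(1)=f'*\T(1)$. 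For the second identity, in $\T*f(1)=\sum_h\T(h^{-1})\circ f(h)$ the factor $\T(h^{-1})$ is nonzero only for $h\in\K g^{-1}\K$, whose left cosets are the $\K g^{-1}x$ with $x$ in a system of representatives of $(\K\cap\K^{g^{-1}})\backslash\K$; with $h=g^{-1}x$, \eqref{condHT} gives $\T(x^{-1}g)=\tau(x^{-1})\circ\T(g)$ and $f(g^{-1}x)=f(g^{-1})\circ\tau(x)$, whence
\[
\T*f(1)=\sum_{x\in(\K\cap\K^{g^{-1}})\backslash\K}\tau(x^{-1})\circ\T(g)\circ f(g^{-1})\circ\tau(x),
\]
which again depends on $f$ only through $f(g^{-1})$, giving $\T*f(1)=\T*f'(1)$.

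There is no genuine obstacle here: this is the same bookkeeping already performed in the passage from the sums over $\K\backslash\G$ to the formulas \eqref{EA} and \eqref{EB}. The only step deserving a line of care is the combinatorial one — verifying that the support condition on $\T$ truncates each sum over $\K\backslash\G$ to exactly the left cosets inside $\K g\K$ (resp.\ $\K g^{-1}\K$), and identifying those cosets with $(\K\cap\K^g)\backslash\K$ (resp.\ $(\K\cap\K^{g^{-1}})\backslash\K$). Once this is set up, equivariance finishes the argument, since both resulting expressions manifestly factor through $f(g^{-1})$.
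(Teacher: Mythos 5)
Your proof is correct and follows exactly the paper's route: the paper's own argument consists precisely of the two displayed formulas you derive (obtained, as it says, \emph{de fa\c{c}on analogue \`a} \eqref{EA} et \eqref{EB}), from which the conclusion is immediate since both expressions depend on $f$ only through $f(g^{-1})$. Your write-up merely makes explicit the coset decomposition of $\K g\K$ and $\K g^{-1}\K$ and the use of the equivariance \eqref{condHT}, which the paper leaves implicit.
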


\begin{proof}
De façon analogue à \eqref{EA} et \eqref{EB}, on a~: 
\begin{eqnarray*}
f*\T(1)&=&
\sum\limits_{(\K\cap\K^g)\backslash \K}
\tau(x^{-1})\circ f(g^{-1})\circ\T(g)\circ\tau(x),\\
\T*f(1)&=&
\sum\limits_{(\K\cap\K^{g^{-1}})\backslash \K}
\tau(x^{-1})\circ\T(g)\circ f(g^{-1})\circ\tau(x),\\
\end{eqnarray*}
ce qui prouve le résultat. 
\end{proof}

On forme les hypothèses suivantes. 

\begin{hypo}
\label{h3}
Pour tout $g\in \G$, l'espace d'entrelacement
$\Hom_{\K\cap \K^g}(\tau,\tau^g)$ est de dimension au plus $1$.
\end{hypo}

\begin{hypo}
\label{h4}
Tout élément non nul de $\Hh$ supporté par une seule double classe est inversible. 
\end{hypo}

Ces hypothèses sont vérifiées fréquemment en théorie des types 
(\cite{MSt}, lem\-me 2.19 et co\-rol\-laire 2.23).

\begin{prop}
\label{hermitien}
Sous les hypothèses \ref{h3} et \ref{h4}, l'accouplement \eqref{accVS} 
est hermitien.
\end{prop}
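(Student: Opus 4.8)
We must show that under Hypotheses \ref{h3} and \ref{h4}, the pairing \eqref{accVS} is hermitian, i.e.\ that $\psi\circ(\h\T)=(\T\psi)\circ\h$ for all $\h\in\Hom_\K(\W,\V)$, $\psi\in\Hom_\K(\V,\W)$ and $\T\in\Hh$. Since both sides are $\R$-linear in $\T$ and $\Hh$ is spanned by its elements supported on a single double coset $\K g\K$, it suffices to treat such a $\T$; we fix one, together with $g\in\G$ and $\h$, $\psi$. The strategy is to bring everything back to the function $\E=\E(\h,\psi,\cdot)$ of Lemma \ref{Intertwining}: by that lemma, $\E$ satisfies condition \eqref{condHT}, hence its restriction to $\K g^{-1}\K$ lies in $\Hh$ (in fact in $\HT$), and formulas \eqref{EA} and \eqref{EB} express $\psi\circ(\h\T)$ and $(\T\psi)\circ\h$ respectively as convolution-type sums involving $\E(g^{-1})$ and $\T(g)$. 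Concretely, one recognizes the right-hand side of \eqref{EA} as $(\E * \T)(1)$ and the right-hand side of \eqref{EB} as $(\T * \E)(1)$, in the notation of the module structure on $\HT$ introduced in \S\ref{hyp12enonce}; this is exactly the shape handled by Lemma \ref{dg1}, except that here we genuinely need $\E * \T = \T * \E$ rather than a statement about a fixed value.

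The heart of the matter is therefore to show $\E * \T = \T * \E$ as elements of $\Hh$, equivalently (by Lemma \ref{dg1}, applied after translating by arbitrary group elements, or directly via the analogues of \eqref{EA}, \eqref{EB} centered at points other than $1$) that $\E(g^{-1})\circ\T(g)=\T(g)\circ\E(g^{-1})$ as endomorphisms of $\W$. Here Hypothesis \ref{h3} enters decisively: by Lemma \ref{Intertwining}(1), $\E(g^{-1})$ intertwines $\tau$ with $\tau^{g^{-1}}$, and $\T(g)$ — being the value at $g$ of an element of $\Hh$ supported on $\K g\K$ — intertwines $\tau$ with $\tau^g$; transposing/inverting appropriately, both $\E(g^{-1})$ and $\T(g)$ lie in the space $\Hom_{\K\cap\K^{g^{-1}}}(\tau,\tau^{g^{-1}})$ (or a canonically identified one), which by Hypothesis \ref{h3} has dimension at most $1$. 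Thus $\E(g^{-1})$ is a scalar multiple of $\T(g)$ — provided $\T(g)\neq 0$, which holds since $\T\neq 0$ is supported on the single double coset $\K g\K$ — and two endomorphisms that are scalar multiples of each other commute automatically. (If $\E(g^{-1})=0$ the commutation is trivial; the only subtlety is when $\Hom_{\K\cap\K^{g^{-1}}}(\tau,\tau^{g^{-1}})=\{0\}$, in which case $\T(g)=0$ forces $\T=0$ and there is nothing to prove — but Hypothesis \ref{h4} guarantees a nonzero $\T$ on each double coset that actually supports one, so this degenerate case does not obstruct spanning $\Hh$.)

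Having $\E(g^{-1})\circ\T(g)=\T(g)\circ\E(g^{-1})$, one reads off from \eqref{EA} and \eqref{EB} that the two sums agree: each term $\tau(x^{-1})\circ\E(g^{-1})\circ\T(g)\circ\tau(x)$ on the first side matches $\tau(x^{-1})\circ\T(g)\circ\E(g^{-1})\circ\tau(x)$ on the second, and a choice of representatives for $(\K\cap\K^g)\backslash\K$ versus $(\K\cap\K^{g^{-1}})\backslash\K$ identifies the index sets (both have cardinality the number of $\K$-cosets in $\K g\K$). Hence $\psi\circ(\h\T)=(\T\psi)\circ\h$, which is the hermitian property for this $\T$; by linearity and the spanning remark, it holds for all $\T\in\Hh$.

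The main obstacle is the role of Hypothesis \ref{h4}: without it, $\Hh$ need not be spanned by its invertible double-coset elements — indeed not even by those double-coset elements $\T$ with $\T(g)\neq 0$ — and one would have to argue separately that on a double coset supporting only the zero element of $\Hh$ there is genuinely nothing to check. Hypothesis \ref{h4} removes this worry cleanly, since it says every nonzero double-coset element is invertible, so in particular the double cosets that contribute to a basis of $\Hh$ all have $\T(g)$ an isomorphism, putting us squarely in the one-dimensional intertwining regime where the commutation argument applies. The rest is a bookkeeping match of \eqref{EA} against \eqref{EB}, which Lemma \ref{dg1} has already packaged.
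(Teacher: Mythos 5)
Your reduction to a $\T$ supported on a single double coset $\K g\K$, and the identification of the two sides of the pairing with $(\E*\T)(1)$ and $(\T*\E)(1)$, agree with the paper; note that only the values at $1$ are needed, so Lemma \ref{dg1} as stated already suffices and you do not need the full equality $\E*\T=\T*\E$. The gap is in your central claim. You assert that $\E(g^{-1})$ and $\T(g)$ lie in a common intertwining space of dimension $\<1$ and are therefore proportional, hence commute. But these operators live in \emph{different} spaces: $\T(g)$ belongs to $\Hom_{\K\cap\K^{g}}(\tau,\tau^{g})$, whereas $\E(g^{-1})$ belongs to the space attached to $g^{-1}$, and there is no canonical identification under which one is a scalar multiple of the other. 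The element that genuinely lives in the same line as $\E(g^{-1})$ is $\T^{-1}(g^{-1})$, and producing it --- and showing it is nonzero via ${\rm id}_\W=\T^{-1}*\T(1)$ --- is exactly where Hypoth�se \ref{h4} is used. Your proposal instead demotes \ref{h4} to a remark about spanning $\Hh$, which misses its actual role.

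Even if you had $\E(g^{-1})=z\,\T^{-1}(g^{-1})$, the pointwise commutation $\E(g^{-1})\circ\T(g)=\T(g)\circ\E(g^{-1})$ would not follow: the convolution identities only say that the two coset-averaged products $\T^{-1}*\T(1)$ and $\T*\T^{-1}(1)$ both equal ${\rm id}_\W$, not that the individual compositions agree (the paper's own remark following Theorem \ref{dimpreml} points out that such a commutation is implausible in general). Moreover, your final step --- matching \eqref{EA} against \eqref{EB} term by term --- compares sums indexed by the two different coset spaces $(\K\cap\K^{g})\backslash\K$ and $(\K\cap\K^{g^{-1}})\backslash\K$; making that comparison work is precisely what forces the hypothesis on $\dim\W$ in Theorem \ref{dimpreml}, which Proposition \ref{hermitien} is designed to avoid. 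The correct conclusion is reached without any commutation: once $\E(g^{-1})=z\,\T^{-1}(g^{-1})$, Lemma \ref{dg1} lets you replace $\E$ by $z\,\T^{-1}$ in both convolutions, giving $\E*\T(1)=z\,\T^{-1}*\T(1)=z\cdot{\rm id}_\W=z\,\T*\T^{-1}(1)=\T*\E(1)$.
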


\begin{proof}
Soient $\phi\in\Hom_\K(\W,\V)$, $\psi\in\Hom_\K(\V,\W)$ et $\T\in\Hh$. 
On suppose que $\T$ est supporté par une seule double classe $\K g\K$. 
D'après l'hypothèse \ref{h4}, $\T$ est inversible dans $\Hh$.
Ainsi $\T^{-1}(g^{-1})$ entrelace $\tau$ avec $\tau^{g^{-1}}$.

\begin{lemm}
\label{sym}
$\T^{-1}(g^{-1})$ est non nul. 
\end{lemm}

\begin{proof}
Par définition du produit de convolution, on a~:
\begin{equation*}
{\rm id}_\W=
\T^{-1}*\T(1)=\sum\limits_{(\K\cap\K^g)\backslash \K}
\tau(x^{-1})\circ\T^{-1}(g^{-1})\circ \T(g)\circ\tau(x),
\end{equation*}
où la somme porte sur un système quelconque de représentants de 
$\K\cap\K^g$ dans $\K$.
Par consé\-quent, $\T^{-1}(g^{-1})$ n'est pas nul. 
\end{proof}

D'après le lemme \ref{Intertwining}, l'opérateur $\E(\phi,\psi,g^{-1})$ 
entrelace $\tau$ avec $\tau^{g^{-1}}$. 
D'après l'hypothèse \ref{h3}, il y a donc un scalaire $z\in\R$ tel que~:
\begin{equation*}
\E(\phi,\psi,g^{-1})=z \T^{-1}(g^{-1}).
\end{equation*}
Si l'on applique le lemme \ref{dg1} avec $f=\E$ et $f'=z \T^{-1}$, on a~:
\begin{equation*}
\E*\T(1)=z \T^{-1}*\T(1)=z \T*\T^{-1}(1)=\T*\E(1),
\end{equation*}
\ie que \eqref{EA} et \eqref{EB} coïncident. 
\end{proof}

\begin{rema}
L'hypothèse \ref{h4} pourrait être remplacée par l'hypothèse plus faible 
suivante~: 
pour tout $g\in\G$ et tout $\T\in\Hh$ de support $\K g\K$, il y a une 
fonction $f\in\HT$ 
telle que $f(g^{-1})\neq0$ et $f*\T(1)=\T* f(1)$.
\end{rema}

Si l'on ajoute aux considérations ci-dessus celles des paragraphes 
\ref{par25} et \ref{fWss}, on a le théorème suivant. 

\begin{theo}
\label{Herm}
Supposons que les hypothèses \ref{h3} et \ref{h4} sont 
vérifiées et que l'induite compacte 
$\ind^\G_\K(\tau)$ est for\-te\-ment $\W$-semi-simple. 
Alors pour toute représentation lisse $\V$ de $\G$ qui est sous-quotient 
d'une re\-pré\-sentation engendrée par son composant iso\-ty\-pi\-que 
de type $\tau$, 
les $\Hh$-modules $\Hom_{\K}(\V,\W)$ et $\Hom_{\K}(\W,\V)^*$ 
sont isomorphes. 
\end{theo}

\begin{proof}
Une telle représentation $\V$ est sous-quotient 
d'une somme directe arbitraire de copies de $\ind^\G_\K(\tau)$. 
Le résultat est alors une conséquence des propositions 
\ref{WSS}, \ref{fWsssqs} et \ref{hermitien}.
\end{proof}

Il se peut que $\Hom_\K(\V,\W)$ et $\Hom_\K(\W,\V)^*$ 
soient des $\Hh$-modules à gauche 
isomorphes sans que cet isomorphisme ne provienne de 
l'accouplement \eqref{accVS}.  

\begin{exem}
\label{EX2}
Supposons que $\R$ est de caractéristique $\ell>0$, 
et supposons que $\G=\K$ est un groupe cyclique d'ordre $\ell$. 
Soit $(\pi,\V)$ une auto-extension non triviale du caractère trivial de $\G$
et soit $(\tau,\W)$ le caractè\-re trivial de $\K$.
L'accouplement \eqref{accVS} est nul~; 
pourtant, les $\Hh$-modules $\Hom_\K(\V,\W)$ et $\Hom_\K(\W,\V)^*$ 
sont tous deux isomorphes au caractère trivial de $\Hh$. 
\end{exem}

\subsection{}
\label{hyp2}

Revenons à la notion de $\W$-semi-simplicité définie au paragraphe \ref{S23}. 

\begin{prop}
\label{indWss}
Supposons que les hypothèses \ref{h3} et \ref{h4} sont vérifiées.
Alors l'induite com\-pac\-te 
$\ind^\G_\K(\tau)$ est $\W$-semi-simple.
\end{prop}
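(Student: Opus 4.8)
The plan is to establish condition~(2) of Proposition~\ref{WSS} for $\V=\ind^\G_\K(\tau)$, i.e. that the pairing \eqref{accVS} induces an isomorphism $\Hom_\K(\V,\W)\simeq\Hom_\K(\W,\V)^*$; by that same proposition this is precisely the assertion that $\ind^\G_\K(\tau)$ is $\W$-semi-simple.

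The first, and I expect the most delicate, step is to make both sides completely explicit. The restriction of $\ind^\G_\K(\tau)$ to $\K$ is the \emph{direct sum}, over the double cosets $\K g\K\in\K\backslash\G/\K$, of the subspaces of functions supported on a single double coset, each of these being $\Ind_{\K\cap\K^g}^\K(\tau^g)$. Frobenius reciprocity then yields a canonical isomorphism of right $\Hh$-modules $\Hom_\K(\W,\V)\simeq\Hh$, the free module of rank one, together with a canonical isomorphism $\Hom_\K(\V,\W)\simeq\HT$, where $\HT$ is the space of functions satisfying \eqref{condHT} introduced in \S\ref{hyp12enonce}; under the latter $\Hh\subseteq\HT$ corresponds to those $\psi$ killing all but finitely many of the double-coset pieces. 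The point of this reformulation is that, under these identifications, the pairing \eqref{accVS} becomes simply $(\T,f)\mapsto(f*\T)(1)$: indeed, if $\phi$ corresponds to $\id\in\Hh$, then the function $\E(\phi,\psi,\cdot)$ of Lemma~\ref{Intertwining} is exactly the element $f\in\HT$ attached to $\psi$, so that $\psi\circ(\phi\T)=\sum_{\K\backslash\G}f(g^{-1})\circ\T(g)=(f*\T)(1)$, a value that lies in $\End_\K(\W)=\R\cdot\id_\W$ because $f*\T$ again belongs to $\HT$. Once this dictionary is set up, it suffices to show that the pairing $\Hh\times\HT\to\R$, $(\T,f)\mapsto(f*\T)(1)$, identifies $\HT$ with $\Hh^*$.

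Now I would bring in the two hypotheses. Writing $\Hh_g$ (resp. $\HT_g$) for the elements of $\Hh$ (resp. $\HT$) supported on $\K g\K$: since $\K g\K$ is a finite union of cosets of the compact open subgroup $\K$, it is compact, whence $\HT_g=\Hh_g$ for every $g$; and Hypothesis~\ref{h3} gives $\dim\Hh_g\le1$. Letting $\D\subseteq\K\backslash\G/\K$ be the set of double cosets with $\Hh_g\ne\{0\}$, we obtain $\Hh=\bigoplus_{g\in\D}\Hh_g$ while $\HT=\prod_{g\in\D}\Hh_g$; that is, $\HT$ is the ``double-coset completion'' of $\Hh$, which is exactly what allows $\Hom_\K(\V,\W)$ to match the \emph{full} linear dual $\Hh^*$. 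Hypothesis~\ref{h4} enters next: if $g\in\D$ and $\T\in\Hh_g\smallsetminus\{0\}$, then $\T$ is invertible in $\Hh$, and since $\mathrm{supp}(\SS*\SS')\subseteq\mathrm{supp}(\SS)\cdot\mathrm{supp}(\SS')$ while $\T*\T^{-1}$ and $\T^{-1}*\T$ are supported on $\K$, the inverse $\T^{-1}$ is nonzero and supported on $\K g^{-1}\K$. Hence $\D$ is stable under $g\mapsto g^{-1}$, and $\Hh_{g^{-1}}=\R\cdot\T^{-1}$.

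The final step is to read off nondegeneracy. For $\T\in\Hh_g$ and $f\in\HT_h$ the expansion $(f*\T)(1)=\sum_{\K\backslash\G}f(x^{-1})\circ\T(x)$ has all terms zero unless $x\in\K g\K$ and $x^{-1}\in\K h\K$, so the pairing of $\Hh_g$ with $\HT_h$ vanishes whenever $\K h\K\ne\K g^{-1}\K$; and taking $f=\T^{-1}\in\HT_{g^{-1}}$ gives $(\T^{-1}*\T)(1)=\id_\W$, so the pairing $\Hh_g\times\Hh_{g^{-1}}\to\R$ is a perfect pairing of one-dimensional spaces. Choosing a nonzero $f_g\in\HT_g$ for each $g\in\D$, every element of $\HT$ is uniquely of the form $(c_gf_g)_{g\in\D}$ with arbitrary scalars $c_g$, and its image in $\Hh^*$ is the linear form sending the basis vector of $\Hh_g$ to $c_{g^{-1}}$ times the nonzero pairing of $\Hh_g$ with $\HT_{g^{-1}}$; since $g\mapsto g^{-1}$ permutes $\D$, this is a bijection $\HT\xrightarrow{\ \sim\ }\Hh^*$, hence the required isomorphism $\Hom_\K(\V,\W)\simeq\Hom_\K(\W,\V)^*$, and Proposition~\ref{WSS} finishes the proof. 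The only real obstacle is in the first step — pinning down the isomorphism $\Hom_\K(\V,\W)\simeq\HT$ and checking that \eqref{accVS} really becomes $(\T,f)\mapsto(f*\T)(1)$ and not its transpose — after which the conclusion is forced by Hypotheses~\ref{h3} and~\ref{h4}.
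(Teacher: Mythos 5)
Your proof is correct, and it rests on the same two pillars as the paper's: the Mackey decomposition of $\ind^\G_\K(\tau)|_\K^{}$ into the pieces $\ind^\K_{\K\cap\K^g}(\tau^g)$ indexed by $\K\backslash\G/\K$, and the identity $\T^{-1}*\T(1)={\rm id}_\W$ supplied by Hypoth\`ese \ref{h4}. The packaging, however, is genuinely different. The paper uses the remarque \ref{remaoplusWss} to reduce to each Mackey constituent separately, writes down explicit generators $\a$, $\b$ of the at most one-dimensional (by Hypoth\`ese \ref{h3}) spaces $\Hom_\K(\W,\ind^\K_{\K\cap\K^g}(\tau^g))$ and $\Hom_\K(\ind^\K_{\K\cap\K^g}(\tau^g),\W)$, and computes $\b\circ\a$ by hand, recognizing it as a nonzero multiple of $\T^{-1}*\T(1)$. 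You instead verify condition 2 of the proposition \ref{WSS} globally, by identifying $\Hom_\K(\W,\V)$ with $\Hh$, $\Hom_\K(\V,\W)$ with $\HT$, and the pairing \eqref{accVS} with $(\T,f)\mapsto f*\T(1)$ --- which is exactly formula \eqref{EA} applied to the canonical generator --- and then observing that this pairing is block-diagonal for $\K g\K\leftrightarrow\K g^{-1}\K$ with perfect one-dimensional blocks, so that $\HT\simeq\Hh^*$. What your version buys is a clean conceptual statement ($\HT$ is the full linear dual of $\Hh$ under convolution--evaluation at $1$), at the cost of having to justify the isomorphism $\Hom_\K(\V,\W)\simeq\HT$ and the compatibility of the pairings, which you rightly flag as the delicate step; the paper avoids this by staying inside one double coset at a time. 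One small caveat: your assertion that $\T^{-1}$ is supported on $\K g^{-1}\K$ does not follow from the support inclusion alone (a priori $\T^{-1}$ could have components on other double cosets), but this is harmless, since your argument only uses the component of $\T^{-1}$ on $\K g^{-1}\K$, which your own support analysis shows is nonzero and pairs to ${\rm id}_\W$ with $\T$.
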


\begin{proof}
D'après la formule de Mackey et la remarque \ref{remaoplusWss}, 
pour que l'induite compacte $\ind^\G_\K(\tau)$ 
soit $\W$-semi-simple, il faut et il suffit que, pour chaque $g\in\G$, l'espace~:
\begin{equation*}
\I(g)=\ind^\K_{\K\cap\K^g}(\tau^g)
\end{equation*} 
le soit.
Étant donné $g\in\G$, l'hypothèse \ref{h4} implique (voir le lemme \ref{sym}) que~: 
\begin{equation*}
\Hom_{\K\cap\K^g}(\rho,\rho^g)\neq\{0\}
\quad\Leftrightarrow\quad
\Hom_{\K\cap\K^{g^{-1}}}(\tau,\tau^{g^{-1}})=\Hom_{\K\cap\K^g}(\rho^g,\rho)\neq\{0\}.
\end{equation*}
En d'autres termes, $\Hom_\K(\W,\I(g))$ est nul si et seulement si 
$\Hom_\K(\I(g),\W)$ est nul, auquel cas $\I(g)$ est $\W$-semi-simple.

Dans le cas où ces espaces sont non nuls, ce que nous supposons maintenant 
jusqu'à la fin de la preuve, 
ils sont de dimension $1$ d'après l'hypothèse \ref{h3}. 
Fixons un élément non nul $\T\in\Hh$ sup\-por\-té par la double classe 
$\K g\K$. 
Alors $\T(g)\in\Hom_{\K\cap\K^g}(\tau,\tau^g)$ est un opérateur 
d'entrelacement non nul et si l'on note $k\cdot w$ le vecteur 
$\tau(k)(w)$ pour $w\in\W$ et $k\in\K$, 
l'appli\-ca\-tion $\a\in\Hom_\K(\W,\I(g))$ défi\-nie par~: 
\begin{equation*}
\a(w)(k)=\T(g)(k\cdot w)
\end{equation*}
pour $w\in\W$ et $k\in\K$ 
est un générateur de $\Hom_\K(\W,\I(g))$.  
Parallèlement, d'après le lemme \ref{sym} à nou\-veau, 
$\T^{-1}(g^{-1})\in\Hom_{\K\cap\K^g}(\tau^g,\tau)$ 
est un opérateur d'entrelacement non nul. 
Si pour tout $w\in\W$ et tout $k\in\K$ 
on note $[k,w]$ l'élément de $\I(g)$ de support $(\K\cap\K^g)k$ et 
prenant en $k$ la valeur $w$, 
alors l'appli\-ca\-tion $\b\in\Hom_\K(\I(g),\W)$ définie par~: 
\begin{equation*}
\b([k,w])=k^{-1}\cdot\T^{-1}(g^{-1})(w)
\end{equation*}
pour $w\in\W$ et $k\in\K$ est un générateur de $\Hom_\K(\I(g),\W)$.
Ainsi $\I(g)$ est $\W$-semi-simple 
si et seulement si $\b\circ\a$ est un opérateur non nul de $\End_\K(\W)$. 
Pour tout $w\in\W$, on a~:
\begin{equation*}
\a(w)
=\sum\limits_{(\K\cap\K^g)\backslash\K}[x,\a(w)(x)]
=\sum\limits_{(\K\cap\K^g)\backslash\K}[x,\T(g)(x\cdot w)]
\end{equation*}
où $x$ décrit un système quelconque de représentants de 
$(\K\cap\K^g)\backslash\K$ dans $\K$, donc~:
\begin{equation*}
\b(\a(w))=\sum\limits_{(\K\cap\K^g)\backslash\K}
x^{-1}\cdot\T^{-1}(g^{-1})(\T(g)(x\cdot w))
\end{equation*}
 \ie que~:
\begin{equation*}
\b\circ\a=\sum\limits_{(\K\cap\K^g)\backslash\K}
\tau(x^{-1})\circ\T^{-1}(g^{-1})\circ\T(g)\circ\tau(x).
\end{equation*}
Ainsi, on trouve que $\b\circ\a$ 
est égal à $\T^{-1}*\T(1)={\rm id}_\W$.
\end{proof}

On peut se demander s'il y a des exemples où les hypothèses 
\ref{h3} et \ref{h4} sont vérifiées et où l'induite com\-pac\-te 
$\ind^\G_\K(\tau)$ n'est pas fortement $\W$-semi-simple.
Nous ne connaissons pas de tels exemples.
En revanche, nous avons le résultat suivant. 

\begin{prop}
\label{triv}
Supposons que $\R$ est de caractéristique $\ell>0$,
et supposons que $\tau$ est le caractère trivial de $\K$.  
Les conditions suivantes sont équivalentes. 
\begin{enumerate}
\item 
Pour tout $g\in\G$, l'indice de $\K\cap\K^g$ dans $\K$ est 
premier à $\ell$.
\item
L'induite com\-pac\-te $\ind^\G_\K(1)$ est $\W$-semi-simple. 
\end{enumerate}
\end{prop}

\begin{proof}
Remarquons d'abord que le caractère trivial de $\K$ vérifie automatiquement 
l'hypothèse \ref{h3}. 
Plus précisément, et avec les notations de la preuve de la proposition 
\ref{indWss}, l'unique sous-espace de $\I(g)$, pour $g\in\G$, qui soit 
invariant par $\K$ est celui des fonctions constantes, tandis que 
son unique (à un scalaire près) forme linéaire invariante par $\K$ 
est donnée par la somme des valeurs.
Il s'ensuit que $\I(g)$ est $\W$-semi-simple
si et seulement si  l'indice de $\K\cap\K^g$ dans $\K$ est premier à $\ell$.
\end{proof}


Voici deux exemples qui illustrent la proposition \ref{triv}.

\begin{exem}
Supposons que $\R$ est de caractéristique $\ell>0$. 
\begin{enumerate}
\item 
Si $\K$ est un pro-$\ell$-groupe, 
alors $\tau$ est le caractère trivial de $\K$ et 
$\ind^\G_\K(1)$ est $\W$-semi-simple si et seulement si $\K$ 
est normal dans $\G$.
\item 
Soit $\F$ un corps localement compact et non archimédien,
et soit $\G$ le groupe $\GL_n(\F)$ avec $n\>2$.
Supposons que $(\tau,\W)$ est le ca\-rac\-tère trivial d'un 
sous-groupe compact maximal $\K\subseteq\G$.
Alors l'induite $\ind^\G_\K(1)$ est $\W$-semi-simple 
si et seulement si le cardinal $q$ du corps résiduel de $\F$ 
et tous les $1+q+\dots+q^{k}$ avec $k\in\{1,\dots,n-1\}$
sont premiers à $\ell$. 
\end{enumerate}
\end{exem}

\subsection{}

Pour terminer cette section, on fait le lien entre la $\W$-semi-simplicité 
forte et la notion de quasi-projectivité introduite dans l'appendice de \cite{Vigs}. 

\begin{defi}
Une représentation lisse $\Q$ de $\G$ est \textit{quasi-projective} si, 
pour tout homomorphisme surjectif $f$ de $\Q$ dans une représentation $\V$, 
l'homomorphisme de $\End_\G(\Q)$-modules de $\End_\G(\Q)$ dans 
$\Hom_\G(\Q,\V)$ défini par $u\mapsto f\circ u$ est surjectif.
\end{defi}

\begin{prop}
\label{FSSiQPTF}
Supposons que l'induite compacte $\ind^\G_\K(\tau)$ est fortement 
$\W$-semi-simple.
Alors elle est quasi-projective et de type fini. 
\end{prop}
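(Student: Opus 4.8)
The plan is to reduce quasi-projectivity to a lifting property for $\K$-homomorphisms out of $\W$, and then to exploit strong $\W$-semi-simplicity. Write $\Q=\ind^\G_\K(\tau)$ and let $\iota\colon\W\to\Q$ be the canonical $\K$-equivariant embedding (functions supported on $\K$). I would first note that $\Q$ is generated as a representation of $\G$ by $\iota(\W)$, which is finite-dimensional, so $\Q$ is of finite type. For quasi-projectivity I would use Frobenius reciprocity --- compact induction from the open subgroup $\K$ is left adjoint to restriction --- which identifies the functor $\Hom_\G(\Q,-)$ with $\Hom_\K(\W,-)$ via $F\mapsto F\circ\iota$. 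By naturality of this adjunction, given a surjective $\G$-homomorphism $f\colon\Q\to\V$, the map $\End_\G(\Q)\to\Hom_\G(\Q,\V)$, $u\mapsto f\circ u$, of the definition of quasi-projectivity is transported to the map $\Hom_\K(\W,\Q)\to\Hom_\K(\W,\V)$ given by post-composition with $f$. So it will suffice to show that every $\K$-homomorphism $\W\to\V$ lifts along $f$ to a $\K$-homomorphism $\W\to\Q$.

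To establish this lifting property I would write $\Q=\Q(\tau)\oplus\Q'$ as a representation of $\K$, with $\Q'$ having no subquotient isomorphic to $\W$ (the hypothesis). Since $\V$ is a quotient of $\Q$ as a representation of $\K$, Proposition \ref{fWsssqs} shows it is strongly $\W$-semi-simple: write $\V=\V(\tau)\oplus\V'$ with $\V'$ having no subquotient isomorphic to $\W$. As $\W$ is irreducible one has $\Hom_\K(\W,\Q)=\Hom_\K(\W,\Q(\tau))$ and $\Hom_\K(\W,\V)=\Hom_\K(\W,\V(\tau))$, so I may replace $\Q$ by $\Q(\tau)$ and $\V$ by $\V(\tau)$ throughout.

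The key step is then to check that $f$ restricts to a surjection $\Q(\tau)\to\V(\tau)$. Here $f(\Q(\tau))\subseteq\V(\tau)$ since the image of a $\tau$-isotypic representation is again $\tau$-isotypic; and $f(\Q')\subseteq\V'$, because $f(\Q')$ is a subrepresentation of $\V$ and a quotient of $\Q'$, hence has no subquotient isomorphic to $\W$, so its projection onto $\V(\tau)$ --- which would be a $\tau$-isotypic subrepresentation with no subquotient isomorphic to $\W$ --- vanishes. Decomposing an arbitrary $f$-preimage of $v\in\V(\tau)$ along $\Q(\tau)\oplus\Q'$ then yields $v\in f(\Q(\tau))$. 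Since $\Q(\tau)$ and $\V(\tau)$ are $\tau$-isotypic, hence semisimple, this surjection splits; choosing a $\K$-equivariant section $s\colon\V(\tau)\to\Q(\tau)$, every $\alpha\in\Hom_\K(\W,\V(\tau))$ lifts to $s\circ\alpha\in\Hom_\K(\W,\Q(\tau))$. This gives quasi-projectivity, and the finite-type assertion was already noted.

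The main obstacle, as I see it, is the surjectivity of $f$ on the $\tau$-isotypic components: this is precisely where the strong version of $\W$-semi-simplicity (the property ``no subquotient isomorphic to $\W$'') is used --- mere $\W$-semi-simplicity is not stable under $\K$-quotients, so Proposition \ref{fWsssqs} would not apply --- together with the observation that a subrepresentation of $\V'$ cannot contribute to $\V(\tau)$. Everything else --- the finite-type statement, the transport through the adjunction, and the splitting of a surjection of semisimple representations --- is formal.
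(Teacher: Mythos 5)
Your proof is correct and follows essentially the same route as the paper: reduce via Frobenius reciprocity to showing that $f$ induces a surjection $\Hom_\K(\W,\Q(\tau))\to\Hom_\K(\W,\V(\tau))$, using Proposition \ref{fWsssqs} to transfer strong $\W$-semi-simplicity to $\V$. You merely spell out two steps the paper leaves implicit (why $f$ restricts to a surjection $\Q(\tau)\to\V(\tau)$, and why that surjection of isotypic, hence semisimple, representations splits), and these justifications are accurate.
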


\begin{proof}
On note $\Q$ l'induite compacte $\ind^\G_\K(\tau)$. 
Soit $f:\Q\to\V$ un homomorphisme surjectif de représentations de $\G$.  
D'après la proposition \ref{fWsssqs}, 
$\V$ est fortement $\W$-semi-simple puisque $\Q$ l'est, et $f$ induit un 
homomorphisme surjectif $\K$-équivariant de $\Q(\tau)$ dans $\V(\tau)$. 
On en déduit un homomorphisme surjectif~:
\begin{equation*}
\Hom_\K(\W,\Q(\tau))\to\Hom_\K(\W,\V(\tau))
\end{equation*}
Le membre de droite est égal à $\Hom_{\K}(\W,\V)$, 
et le membre de gauche est égal à $\Hom_{\K}(\W,\Q)$, 
qui est canoniquement isomorphe à $\End_\G(\Q)$ par réciprocité de Frobenius. 
L'homomorphisme surjectif de 
$\End_\G(\Q)$ dans $\Hom_\G(\Q,\V)$ ainsi obtenu est bien $u\mapsto f\circ u$.
\end{proof}

\begin{rema}
La proposition \ref{FSSiQPTF} peut aussi être obtenue en utilisant 
\cite[Lemma 3.1]{VigEMS} et la notion de presque-projectivité de \cite{Vigs}. 
\end{rema}

Grâce aux théorèmes 4 et 10 de l'appendice de \cite{Vigs}, on déduit de la 
proposition \ref{FSSiQPTF} le résultat suivant. 

\begin{coro}
Si l'induite compacte $\ind^\G_\K(\tau)$ est fortement 
$\W$-semi-simple, alors le foncteur $\V\mapsto\Hom_\K(\W,\V)$ 
possède les propriétés suivantes~: 
\begin{enumerate}
\item
Il induit une bijection entre les classes d'isomorphisme de représentations 
irréductibles $\V$ de $\G$ telles que $\V(\tau)\neq\{0\}$ 
et les classes d'isomorphisme de $\Hh$-modules à droite simples. 
\item
Il est exact sur la sous-catégorie pleine des représentations lisses de $\G$ 
formée des repré\-sen\-tations qui sont sous-quotients de représentations 
engendrées par leur composant isotypique de type $\tau$.
\end{enumerate}
\end{coro}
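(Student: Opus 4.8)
The plan is to reduce the statement to the two properties of quasi-projective representations of finite type recalled above, applied to $\Q=\ind^\G_\K(\tau)$. By the preceding proposition, $\Q$ is quasi-projective and of finite type. Frobenius reciprocity for compact induction gives a natural isomorphism of functors $\Hom_\K(\W,-)\simeq\Hom_\G(\Q,-)$; in particular $\End_\G(\Q)\simeq\Hom_\K(\W,\Q)\simeq\Hh$ (the same identification already used in the proof of the preceding proposition), and under it the right $\Hh$-module structure on $\Hom_\K(\W,\V)$ corresponds to the right $\End_\G(\Q)$-module structure on $\Hom_\G(\Q,\V)$ given by precomposition. So it remains only to match the descriptions in assertions (1) and (2) with those in the recalled properties of the functor $\Hom_\G(\Q,-)$.

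For assertion (1): I would check that an irreducible smooth representation $\V$ of $\G$ is a quotient of $\Q$ if and only if $\V(\tau)\neq\{0\}$. Indeed $\V$ is a quotient of $\Q$ if and only if $\Hom_\G(\Q,\V)\neq\{0\}$, since any nonzero $\G$-morphism onto an irreducible representation is surjective; by Frobenius reciprocity this is equivalent to $\Hom_\K(\W,\V)\neq\{0\}$, that is, to $\tau$ occurring in $\V|_\K$, i.e. to $\V(\tau)\neq\{0\}$. The first recalled property of $\Hom_\G(\Q,-)$ then yields exactly the bijection asserted in (1), between isomorphism classes of irreducible $\V$ with $\V(\tau)\neq\{0\}$ and isomorphism classes of simple right $\Hh$-modules.

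For assertion (2): I would check that the full subcategory of representations which are subquotients of representations generated by their $\tau$-isotypic component coincides with the full subcategory of representations which are subquotients of arbitrary direct sums of copies of $\Q$; the second recalled property then gives the desired exactness. If $\V$ is generated by $\V(\tau)$, then by the adjunction each $\phi\in\Hom_\K(\W,\V)$ extends to a $\G$-morphism $\Q\to\V$, and letting $\phi$ range over a generating family of $\Hom_\K(\W,\V)$ produces a surjection from a direct sum of copies of $\Q$ onto $\V$; hence every subquotient of such a $\V$ is a subquotient of a direct sum of copies of $\Q$. Conversely, a direct sum of copies of $\Q=\ind^\G_\K(\W)$ is generated over $\G$ by its $\tau$-isotypic component (in each copy, already by the standard copy of $\W$ supported on the trivial coset), so the two subcategories indeed agree. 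This last identification is precisely the observation already invoked in the proof of Theorem \ref{Herm}; it is essentially the only point requiring care, and I do not anticipate any genuine obstacle.
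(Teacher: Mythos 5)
Your argument is correct and is essentially the route the paper intends: the corollary is meant to follow directly from the preceding proposition (quasi-projectivity and finite type of $\Q=\ind^\G_\K(\tau)$) combined with the two recalled properties of quasi-projective representations of finite type, via the Frobenius-reciprocity identifications $\Hom_\G(\Q,-)\simeq\Hom_\K(\W,-)$ and $\End_\G(\Q)\simeq\Hh$. Your two bridging verifications (irreducible quotients of $\Q$ are exactly the irreducibles with $\V(\tau)\neq\{0\}$, and subquotients of sums of copies of $\Q$ are exactly subquotients of representations generated by their $\tau$-isotypic component) are precisely the points the paper leaves implicit, the second being the same observation used in the proof of Theorem \ref{Herm}.
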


\begin{rema}
La sous-catégorie pleine des représentations lisses de $\G$ qui sont 
sous-quo\-tients 
de représentations engendrées par leur composante isotypique de type $\tau$
n'est pas stable par ex\-ten\-sions en général~:
elle ne contient pas la représentation de l'exemple \ref{EX2}. 
\end{rema}

\section{Application aux types simples}
\label{S4}

On suppose dans ce paragraphe que $\G$ est le groupe $\GL_m(\D)$, 
où $m\>1$ et où $\D$ est une algèbre à division centrale et de dimension 
finie sur un corps localement compact et non archimédien $\F$ de 
carac\-téristique résiduelle $p$. 
On suppose que $\R$ est algébriquement clos et de 
carac\-téristique différente de $p$, et que $(\K,\tau,\W)$ est un type 
simple de $\G$ au sens de \cite{MSt}, §2.5.

D'après \cite{MSt}, lem\-me 2.19 et co\-rol\-laire 2.23, 
les hypothèses \ref{h3} et \ref{h4} du paragra\-phe \ref{hyp12enonce} 
sont satis\-faites pour $(\K,\tau,\W)$, 
et il découle de la preuve de \cite{MSt}, proposition 4.9 
que l'induite compacte $\ind^\G_\K(\tau)$ est for\-tement 
$\W$-semi-simple.
On déduit du théorème \ref{Herm} le résultat suivant. 

\begin{theo}
\label{thtss}
Soit $\V$ un sous-quotient d'une 
représentation de $\GL_m(\D)$ engen\-drée par sa compo\-sante isotypique
de type $\tau$.
Alors les $\Hh$-modules à 
gauche $\Hom_\K(\V,\W)$ et $\Hom_\K(\W,\V)^*$ sont isomorphes. 
\end{theo}

Ce théorème permet de montrer que la classification de Zelevinski 
$\m\mapsto\Z(\m)$ 
des repré\-sen\-ta\-tions 
lisses irréductibles de $\GL_m(\D)$ à coefficients dans $\R$ en termes
de multisegments \cite{MSc} 
est com\-pa\-tible au passage à la contra\-gré\-diente, \ie que si 
$\m$ est un multisegment et $\m^\vee$ son multisegment contragrédient, 
alors $\Z(\m^\vee)$ est isomorphe à la contragrédiente de $\Z(\m)$.

\providecommand{\bysame}{\leavevmode ---\ }
\providecommand{\og}{``}
\providecommand{\fg}{''}
\providecommand{\smfandname}{\&}

\end{document}